\documentclass[12pt,reqno]{amsart}
  \usepackage{amssymb}
  \usepackage{a4}
  \usepackage{a4wide}
  \usepackage{amssymb}
  \usepackage{amsmath}
  \usepackage{amsthm}
  \usepackage{amsfonts}
  \usepackage{amscd}
  \usepackage{xspace}
  \usepackage{boxedminipage}
  \usepackage{nomencl}

  \usepackage{booktabs}
  \usepackage{multirow}
  \usepackage[figuresright]{rotating}
  \usepackage[format=hang,font=small]{caption} 
  \usepackage[curve,graph]{xypic}
  \usepackage{lscape}
  \usepackage{subfigure}

  \usepackage{stmaryrd}

\makeatletter
  \CheckCommand*\refstepcounter[1]{\stepcounter{#1}%
      \protected@edef\@currentlabel
       {\csname p@#1\endcsname\csname the#1\endcsname}%
  }
  \renewcommand*\refstepcounter[1]{\stepcounter{#1}%
    \protected@edef\@currentlabel
      {\csname p@#1\expandafter\endcsname\csname the#1\endcsname}%
  }
  \def\labelformat#1{\expandafter\def\csname p@#1\endcsname##1}
  \DeclareRobustCommand\Ref[1]{\protected@edef\@tempa{\ref{#1}}%
     \expandafter\MakeUppercase\@tempa
  }
  \makeatother

  \makeatletter
  \newcommand{\numberlike}[2]{%
     \expandafter\def\csname c@#1\endcsname{%
         \expandafter\csname c@#2\endcsname}%
  }
  \makeatother


  \def\DefaultNumberTheoremWithin{section}
  \labelformat{section}{Section~#1}
\labelformat{subsection}{Section~#1}
 \theoremstyle{plain}
  \newtheorem{lemma}{Lemma}
     \numberwithin{lemma}{\DefaultNumberTheoremWithin}
     \labelformat{lemma}{Lemma~#1}
  
     \numberwithin{claim}{\DefaultNumberTheoremWithin}
     \numberlike{claim}{lemma}
     \labelformat{claim}{Claim~#1}
  \newtheorem{theorem}{Theorem}
     \numberwithin{theorem}{\DefaultNumberTheoremWithin}
     \numberlike{theorem}{lemma}
     \labelformat{theorem}{Theorem~#1}
  \newtheorem{corollary}{Corollary}
     \numberwithin{corollary}{\DefaultNumberTheoremWithin}
     \numberlike{corollary}{lemma}
     \labelformat{corollary}{Corollary~#1}
  \newtheorem{proposition}{Proposition}
     \numberwithin{proposition}{\DefaultNumberTheoremWithin}
     \numberlike{proposition}{lemma}
     \labelformat{proposition}{Proposition~#1}
  
     \numberwithin{conjecture}{\DefaultNumberTheoremWithin}
     \numberlike{conjecture}{lemma}
     \labelformat{conjecture}{Conjecture~#1}

  \theoremstyle{definition}
  \newtheorem{definition}{Definition}
     \numberwithin{definition}{\DefaultNumberTheoremWithin}
     \numberlike{definition}{lemma}
     \labelformat{definition}{Definition~#1}

  \theoremstyle{definition}
  \newtheorem{question}{Question}
     \numberwithin{question}{\DefaultNumberTheoremWithin}
     \numberlike{question}{lemma}
     \labelformat{question}{Question~#1}

  \theoremstyle{definition}
  
     \numberwithin{problem}{\DefaultNumberTheoremWithin}
     \numberlike{problem}{lemma}
     \labelformat{problem}{Problem~#1}

  \theoremstyle{remark}
  \newtheorem{remark}{Remark}
     \numberwithin{remark}{\DefaultNumberTheoremWithin}
     \numberlike{remark}{lemma}
     \labelformat{remark}{Remark~#1}
  \theoremstyle{remark}

  \newtheorem{example}{Example}
     \numberwithin{example}{\DefaultNumberTheoremWithin}
     \numberlike{example}{lemma}
     \labelformat{example}{Example~#1}
  
     \labelformat{case}{Case~#1}
     \numberwithin{case}{lemma}
  
     \labelformat{step}{Step~#1}
     \numberwithin{step}{lemma}

  \newcommand{\Hilb}{\mathrm{Hilb}}

   \newcommand{\last}{\mathrm{last}}
   
  \newcommand{\supp}{\mathrm{supp}}

  \newcommand{\NN}{\mathbb{N}}
  \newcommand{\ZZ}{\mathbb{Z}}
  \newcommand{\RR}{\mathbb{R}}

  \newcommand{\fC}{\mathfrak{C}}
  \newcommand{\fe}{\mathfrak{e}}

  \newcommand{\ffi}{\mathfrak{i}}
  \newcommand{\fu}{\mathfrak{u}}


  \title[$g$-theorems for the Veronese construction]{Enumerative $g$-theorems for the Veronese construction for formal power series and graded algebras}

  \author{Martina Kubitzke} \thanks{Martina Kubitzke was supported by the Austrian Science Foundation (FWF) through grant Y463-N13. }
    \address{Fakult\"at f\"ur Mathematik, Universit\"at Wien, Garnisongasse 3, A-1090 Wien, Austria}
    \email{martina.kubitzke@univie.ac.at}
  \author{Volkmar Welker}
    \address{Fachbereich Mathematik und Informatik, Philipps-Universit\"at Marburg, D-35032 Marburg, Germany} 
    \email{welker@mathematik.uni-marburg.de}

\begin{document}

  \begin{abstract}
    Let $(a_n)_{n \geq 0}$ be a sequence of integers such that
    its generating series satisfies $\sum_{n \geq 0} a_nt^n = \frac{h(t)}{(1-t)^d}$ for some polynomial $h(t)$.
    For any $r \geq 1$ we study the coefficient sequence 
    of the numerator polynomial $h_0(a^{\langle r \rangle}) + \cdots + h_{\lambda'}(a^{\langle r \rangle}) t^{\lambda'}$ of the 
    $r$\textsuperscript{th} Veronese series $a^{\langle r \rangle}(t) = \sum_{n \geq 0} a_{nr} t^n$. 
    Under mild hypothesis we show that the vector of successive differences of this sequence up to the 
    $\lfloor \frac{d}{2} \rfloor$\textsuperscript{th} entry is the $f$-vector of a simplicial complex for 
    large $r$. In particular, the sequence satisfies the consequences of the unimodality part of the 
    $g$-conjecture. We give applications of the main result to Hilbert series of Veronese algebras 
    of standard graded algebras and the $f$-vectors of edgewise subdivisions of simplicial complexes.
  \end{abstract}

  \maketitle

\section{Introduction}

  We consider rational formal power series 
  \begin{eqnarray} \label{eq:powerseries}
    a(t) = \sum_{n \geq 0} a_nt^n & = & \frac{h_0(a) + \cdots + h_\lambda(a) t^\lambda}{(1-t)^d},
  \end{eqnarray}
  with integer coefficients $(a_n)_{n \geq 0}$, where $h_\lambda(a) \neq 0$. 
  We will always use the convention that $h_i(a) = 0$ if $i > \lambda$. 
  For an integer $r \geq 1$ we are interested in the 
  \emph{$r$\textsuperscript{th} Veronese series}
  \begin{eqnarray*} 
    a^{\langle r \rangle} (t) := \sum_{n \geq 0} a_{nr} t^n  & = & 
           \frac{h_0(a^{\langle r \rangle}) + \cdots + h_{\lambda'}(a^{\langle r \rangle}) t^{\lambda'}}{(1-t)^d},
  \end{eqnarray*} 

  where $h_{\lambda'}(a^{\langle r \rangle}) \neq 0$. We call $h(a) := (h_0(a) , \ldots, h_\lambda(a))$ the \emph{$h$-vector} of the 
  rational series $a(t)$ and $g(a) := (g_0(a) , \ldots, g_{\lfloor \frac{\lambda}{2} \rfloor}(a) )$ the \emph{$g$-vector} of $a(t)$, 
  where $g_0(a) := h_0(a)$ and $g_i(a) := h_i(a) - h_{i-1}(a)$ for $1 \leq i \leq \lfloor \frac{\lambda}{2} \rfloor$. If we write $a(t)$ as $b_1(t) + \frac{b_2(t)}{(1-t)^d}$
  for polynomials $b_1(t)$, $b_2(t)$ where $b_2(t)$ is of degree $< d$, then we call $\chi(a) := b_1(0)$ the \emph{characteristic} of $a(t)$.
  We are interested in enumerative properties of 
  $h(a^{\langle r \rangle})$ and $g(a^{\langle r \rangle})$ for large $r$. Our approach builds on results from 
  \cite{BW-Veronese} where the transformation $h(a) \mapsto h(a^{\langle r \rangle})$ was described as a linear 
  transformation and uses ideas from \cite{MuraiManifold} where results similar in spirit to ours were derived for 
  barycentric subdivisions of simplicial complexes. 
  Algebraic, analytic, enumerative and probabilistic aspects of the transformation $h(a) \mapsto h(a^{\langle r \rangle})$
  have recently been studied in a series of papers connecting this transformation to numerous other mathematical objects 
  (see for example \cite{Beck-Stapledon,Borodin-Diaconis-Fulman,BW-Veronese,Diaconis-Fulman,Moll-Robins-Soodhalter}).

  The following is our main result. In the formulation of the result we 
  denote by $f(\Delta):=(f_{-1}(\Delta),f_0(\Delta),\ldots,f_{d-1}(\Delta))$ the \emph{$f$-vector} of a 
  $(d-1)$-dimensional simplicial complex $\Delta$, where 
  \begin{equation*}
    f_i(\Delta) := \Big|\{F\in\Delta~|~\dim F=i\}\Big|
  \end{equation*}
  for $-1\leq i\leq d-1$.  
  Recall, that $\dim F := |F|-1$ for $F\in \Delta$ and that the \emph{dimension} $\dim \Delta$ of $\Delta$ equals $\max_{F \in \Delta} \dim F$.  
   
  \begin{theorem} \label{theorem:main}
    Let $a(t) = \sum_{n \geq 0} a_n t^n = \frac{h_0(a) + \cdots + h_\lambda(a) t^\lambda}{(1-t)^d}$ be a rational formal power series with
    integer coefficient sequence $(a_n)_{n \geq 0}$, where $h_\lambda(a) \neq 0$ and $h_0(a) = 1$. 
    \begin{itemize}
      \item[(i)] If $h_i(a) \geq 0$ for $1 \leq i \leq \lambda$ and $r\geq \max (d,\lambda)$, then there exists a simplicial complex $\Delta_r$ such that 
        \begin{equation*}
           g_i(a^{\langle r\rangle}) = f_{i-1}(\Delta_r) \qquad \qquad \mbox{ for } 0\leq i\leq \left\lfloor\frac{d}{2}\right\rfloor.
        \end{equation*}
      \item[(ii)] If there is an $N > 0$ such that $a_n > 0$ for $n > N$ and $\chi(a) \geq 0$, then there is an $R > N$ such that for 
        each $r\geq R$ and $s \geq d$, there exists a simplicial complex $\Delta_{r\cdot s}$ such that 
        \begin{equation*}
          g_i(a^{\langle r \cdot s\rangle})=f_{i-1}(\Delta_{r \cdot s}) \qquad \qquad \mbox{ for } 0\leq i\leq \left\lfloor\frac{d}{2}\right\rfloor.
        \end{equation*}
      \end{itemize}
  \end{theorem}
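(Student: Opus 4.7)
The plan is to deduce both parts from the Kruskal--Katona theorem: if one can show that for the relevant values of $r$ the truncated $g$-vector $(g_0(a^{\langle r\rangle}), g_1(a^{\langle r\rangle}), \ldots, g_{\lfloor d/2\rfloor}(a^{\langle r\rangle}))$ is a sequence of nonnegative integers with $g_0 = 1$ satisfying the Kruskal--Katona inequalities $g_i \leq g_{i-1}^{\langle i \rangle}$, then the theorem produces a simplicial complex $\Delta_r$ whose $f$-vector realizes it. So everything reduces to checking nonnegativity and Kruskal--Katona.

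For part~(i), I would pull in the explicit linear formula from \cite{BW-Veronese} that writes each $h_i(a^{\langle r\rangle})$ as a polynomial in $r$ whose coefficients are integer combinations of $h_0(a),\ldots,h_\lambda(a)$. Under the hypothesis $h_j(a)\geq 0$, these combinations are nonnegative, and because $h_0(a)=1$ the ``column'' contributed by $h_0$ is always present; a direct inspection of the formula should then show that, as a polynomial in $r$, the degree of $h_i(a^{\langle r\rangle})$ is strictly increasing in $i$ throughout the range $0\leq i\leq \lfloor d/2\rfloor$. Consequently $g_i(a^{\langle r\rangle}) = h_i(a^{\langle r\rangle}) - h_{i-1}(a^{\langle r\rangle})$ has positive leading coefficient and strictly higher polynomial degree in $r$ than $g_{i-1}(a^{\langle r\rangle})$. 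Since the Kruskal--Katona upper bound $g_{i-1}^{\langle i\rangle}$ is, in order of magnitude, $g_{i-1}^{i/(i-1)}$, its growth in $r$ is of strictly larger degree than that of $g_i$, so for every fixed $i$ the inequality $g_i \leq g_{i-1}^{\langle i\rangle}$ holds once $r$ is large enough. Pinning down the explicit threshold $r\geq \max(d,\lambda)$ requires carrying out this polynomial comparison quantitatively rather than merely asymptotically.

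For part~(ii), the weaker hypothesis does not immediately yield $h_i(a)\geq 0$, so I would first argue that eventual positivity of $(a_n)_{n\geq 0}$ together with $\chi(a)\geq 0$ forces all entries of $h(a^{\langle r\rangle})$ to become nonnegative once $r\geq R$ for some $R>N$; this is natural given the BW description of the Veronese transformation and should either be available from earlier in the paper or follow from a short estimate based on the power series data $a_n$ for $n>N$. Once such an $R$ is fixed, the series $a^{\langle r\rangle}$ satisfies the hypothesis of~(i), so applying~(i) to $a^{\langle r\rangle}$ with Veronese parameter $s\geq d$ and using the composition identity $(a^{\langle r\rangle})^{\langle s\rangle} = a^{\langle rs\rangle}$ completes the argument.

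The main obstacle is the precise quantitative analysis in part~(i). The Kruskal--Katona operation $g\mapsto g^{\langle i\rangle}$ is defined piecewise via Macaulay representations and does not interact cleanly with polynomial identities in $r$, so although the asymptotic domination of $g_{i-1}^{\langle i\rangle}$ over $g_i$ is essentially automatic, extracting the clean explicit threshold $r \geq \max(d,\lambda)$ -- uniform over all $i$ in the range $0\leq i\leq \lfloor d/2\rfloor$ -- requires carefully controlling lower-order terms in the BW formula and comparing them against the binomial expansion on the right-hand side. This bookkeeping is likely to be the most delicate part of the argument.
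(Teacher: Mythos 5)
Your overall strategy is genuinely different from the paper's: you aim to verify the Kruskal--Katona inequalities for the truncated $g$-vector asymptotically in $r$, whereas the paper decomposes $g(a^{\langle r\rangle})=\sum_{k}h_k(a)g^{r,d}_k$ (\ref{rem:g-transform}) and proves, by induction on $d$ using the recurrences and symmetries of the numbers $C(r,d,i)$, that $g^{r,d}_0$ is itself an $f$-vector and that each $g^{r,d}_k$ is \emph{admissible} for it in Murai's sense (\ref{pr:admissible}, \ref{lem:admissible}); since $h_0(a)=1$ and $h_k(a)\geq 0$, the combination is then an $f$-vector for every $r\geq\max(d,\lambda)$. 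The trouble is that your route, as proposed, does not prove the theorem as stated: the conclusion of part (i) carries the explicit, uniform threshold $r\geq\max(d,\lambda)$, valid for \emph{all} nonnegative $h$-vectors with $h_0(a)=1$, and your argument only yields ``for $r$ large enough,'' with a bound that a priori depends on $h(a)$ and which you yourself defer as ``bookkeeping.'' That deferred step is the actual content of the theorem; there is no indication how the piecewise-defined Macaulay bound $g_{i-1}^{\langle i\rangle}$ is to be compared with $g_i$ uniformly in the (arbitrarily large, up to $\lambda\leq r$ many) coefficients $h_k(a)$ down to the sharp threshold. Even the nonnegativity $g_i(a^{\langle r\rangle})\geq 0$ for $r\geq d$, which is linear in $h(a)$ and cannot be absorbed by superlinear growth of the Kruskal--Katona bound, requires the structural fact that every column vector $g^{r,d}_k$ is componentwise nonnegative (\ref{lem:nonnegative}); nothing in your sketch supplies this for small $r$.

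There is also an error in the asymptotic reasoning itself: for $1\leq i\leq d-1$ the quantity $h_i(a^{\langle r\rangle})=\sum_j C(r-1,d,ir-j)h_j(a)$ is a polynomial in $r$ of degree $d-1$ for \emph{every} such $i$ (the slice counts $C(r-1,d,ir-j)$ all grow like $c_i r^{d-1}$, only the constants $c_i$ vary), so the degrees are not strictly increasing in $i$, and consequently $g_i$ and $g_{i-1}$ have the same degree $d-1$. Note moreover that under your stated (incorrect) premise the desired conclusion would not even follow: if one had $\deg_r g_i>\deg_r g_{i-1}$, then $g_{i-1}^{i/(i-1)}$ need not dominate $g_i$. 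The asymptotic claim can be salvaged precisely because the degrees are equal and the leading coefficients of $g_{i-1}$ are positive for $1\leq i-1<\lfloor d/2\rfloor$, but this still only gives an eventual statement, not the theorem. Your outline of part (ii) does match the paper's reduction --- show that for $r\geq R$ the numerator of $a^{\langle r\rangle}$ has nonnegative coefficients and degree at most $d$, then apply part (i) with parameter $s\geq d$ via $(a^{\langle r\rangle})^{\langle s\rangle}=a^{\langle rs\rangle}$ --- but the eventual nonnegativity is asserted rather than argued; the paper obtains it from Theorem 1.2 of Beck--Stapledon together with Theorem 1.1 of \cite{BW-Veronese}, and this is where the hypothesis $\chi(a)\geq 0$ enters, since the constant coefficient of the numerator does not grow under the Veronese transformation.
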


  Our main motivation and application for the preceding result lies in the study of Hilbert series of Veronese algebras of standard graded algebras. 
  Let $k$ be a field and $A = \bigoplus_{n\geq 0} A_n$ be a standard graded 
  $k$-algebra of dimension $d$. For $r \geq 1$ the \emph{$r$\textsuperscript{th} Veronese algebra} of $A$ 
  is the $k$-algebra $A^{\langle r \rangle } :=\bigoplus_{n\geq 0}A_{nr}$, which again 
  is standard graded of dimension $d$. The \emph{Hilbert series}
  $\Hilb(A,t) := \sum_{n \geq 0} \dim_k A_n t^n$ is a rational formal power series of the form \eqref{eq:powerseries}.   
  By definition we have $\Hilb(A^{\langle r \rangle},t) = \Hilb(A^{\langle r \rangle},t)$. 
  We call $h(A) := h(\Hilb(A,t))$ the \emph{$h$-vector} of $A$ and $g(A) := g(\Hilb(A,t))$ the \emph{$g$-vector} of $A$. 
  Using the fact that Cohen-Macaulayness of a standard graded $k$-algebra $A$ implies that $h(A) > 0$ \cite[Cor. 4.1.10]{BH-book} 
  the following corollary is a direct consequence of \ref{theorem:main}:

  \begin{corollary} \label{cor:algebra}
    Let $A$ be a $d$-dimensional standard graded $k$-algebra with Hilbert series $\Hilb(A,t) = \frac{h_0(a) + \cdots + h_\lambda(a) t^\lambda}{(1-t)^d}$.
    \begin{itemize}
      \item[(i)] If $A$ is Cohen-Macaulay and $r\geq \max(\lambda,d)$, then there exists a simplicial complex $\Delta_r$ such that 
         \begin{equation*}
           g_i(A^{\langle r\rangle})=f_{i-1}(\Delta_r) \qquad \qquad \mbox{ for } 0\leq i\leq \left\lfloor\frac{d}{2}\right\rfloor.
         \end{equation*}
      \item[(ii)] If $\chi(A)\geq 0$, then there is an $R > 0$ such that for $r\geq R$ and $s \geq d$, there exists a simplicial complex $\Delta_{r\cdot s}$ such that 
         \begin{equation*}
           g_i(A^{\langle r \cdot s \rangle})=f_{i-1}(\Delta_{r \cdot s}) \qquad \qquad \mbox{ for } 0\leq i\leq \left\lfloor\frac{d}{2}\right\rfloor.
         \end{equation*}
     \end{itemize}
  \end{corollary}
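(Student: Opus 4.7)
The plan is to verify that the truncated sequence $(g_0(a^{\langle r \rangle}), \ldots, g_{\lfloor d/2 \rfloor}(a^{\langle r \rangle}))$ satisfies the Macaulay/Kruskal--Katona conditions that characterise $f$-vectors of simplicial complexes (with the convention $f_{i-1}(\Delta_r) = g_i$): namely $g_0 = 1$, $g_i \geq 0$, and the growth bound $g_i \leq g_{i-1}^{\langle i-1 \rangle}$ for $1 \leq i \leq \lfloor d/2 \rfloor$. Once these are established, a simplicial complex $\Delta_r$ with the prescribed $f$-vector exists by the realisability direction of Kruskal--Katona.

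For part (i), the starting point is the explicit description of the $h$-vector transformation $h(a) \mapsto h(a^{\langle r \rangle})$ from \cite{BW-Veronese}: each entry $h_j(a^{\langle r \rangle})$ is a non-negative integer linear combination $\sum_{i=0}^{\lambda} c_{j,i}(r,d)\, h_i(a)$ whose coefficients are explicit (coming from colored permutation statistics) and polynomial in $r$. The evaluation $a^{\langle r \rangle}(0) = a_0 = h_0(a) = 1$ forces $g_0(a^{\langle r \rangle}) = h_0(a^{\langle r \rangle}) = 1$ immediately. Non-negativity of the individual $g_i(a^{\langle r \rangle})$ is not automatic, since $g_i = h_i - h_{i-1}$ is a difference, and will be obtained from the growth analysis below.

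The main obstacle is the Kruskal--Katona growth bound. My plan is an asymptotic analysis in $r$. Using the formulas of \cite{BW-Veronese}, I would determine the polynomial-in-$r$ degree of each $h_j(a^{\langle r \rangle})$ and show that, for every starting direction $h(a) = e_k$, the difference $g_i(a^{\langle r \rangle})$ has degree exactly $i$ in $r$ with an explicit positive leading coefficient. Linearity in $h(a)$ then gives $g_i(a^{\langle r \rangle}) = c_i\, r^i + O(r^{i-1})$ with $c_i > 0$ for any non-negative $h(a)$ normalised by $h_0(a) = 1$. The Macaulay upper bound $g_{i-1}^{\langle i-1 \rangle}$ also grows like $r^i$, as one sees by applying the elementary identity $\binom{x}{i-1}^{\langle i-1 \rangle} = \binom{x}{i}$ to the leading cascade term of $g_{i-1}$. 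Comparing leading coefficients and invoking $r \geq \max(d,\lambda)$ to absorb the lower-order error terms should establish the bound. This is conceptually the strategy of \cite{MuraiManifold} for barycentric subdivisions, transported to the Veronese operator.

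For part (ii), I reduce to (i). Writing $a(t) = b_1(t) + b_2(t)/(1-t)^d$ with $\deg b_2 < d$, a direct calculation shows that for $r > \deg b_1$ one has $a^{\langle r \rangle}(t) = \chi(a) + \bigl(b_2(t)/(1-t)^d\bigr)^{\langle r \rangle}$, since the polynomial part $b_1(t)$ only perturbs finitely many of the $a_n$. The hypothesis $a_n > 0$ for $n > N$, combined with the asymptotic analysis developed for (i), should imply that the $h$-vector of the purely-rational Veronese $\bigl(b_2/(1-t)^d\bigr)^{\langle r \rangle}$ is eventually non-negative; the correction $\chi(a)(1-t)^d$ contributes $\chi(a) \geq 0$ to $h_0$ and lower-order shifts to the remaining entries, leaving positivity intact for $r$ large. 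Hence there exists $R > N$ such that for all $r \geq R$ the series $a^{\langle r \rangle}$ satisfies the hypotheses of (i); applying (i) to $a^{\langle r \rangle}$ with multiplier $s \geq d$, together with the semigroup identity $(a^{\langle r \rangle})^{\langle s \rangle} = a^{\langle r s \rangle}$, produces the required simplicial complex $\Delta_{r s}$.
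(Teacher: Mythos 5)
You are in effect re-proving \ref{theorem:main} from scratch rather than reducing to it; the paper's own proof of the corollary is just that reduction (Cohen--Macaulayness gives $h_i(A)\geq 0$ with $h_0(A)=1$, and for (ii) one notes that the Hilbert function of a $d$-dimensional standard graded algebra is eventually positive and $\chi(A)\geq 0$, so \ref{theorem:main} applies). A direct proof would be welcome, but your replacement argument has a genuine gap at its quantitative core. Part (i) claims the conclusion for \emph{every} $r\geq\max(d,\lambda)$, e.g.\ already for $r=d=\lambda$, whereas your verification of the Kruskal--Katona inequalities is purely asymptotic (``leading coefficient positive, absorb lower-order terms''). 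That can only produce the statement for $r$ sufficiently large, with a threshold depending on the uncontrolled error terms, and no mechanism is offered for converting it into the explicit bound $\max(d,\lambda)$; this is precisely the point where the paper needs exact combinatorics --- the recursions of \ref{lemma:rec}, the sign analysis of \ref{lem:nonnegative}, and Murai's admissible-vector machinery in \ref{pr:admissible} --- instead of asymptotics. Moreover the asymptotics you assert are wrong: since $h_i(a^{\langle r\rangle})=\sum_j C(r-1,d,ir-j)h_j(a)$ and $C(r-1,d,ir-j)$ counts lattice points in a hyperplane slice of the cube $[0,r-1]^d$, each $h_i(a^{\langle r\rangle})$ with $1\leq i\leq d-1$ grows like a constant times $r^{d-1}$, so $g_i(a^{\langle r\rangle})$ grows like (a difference of consecutive slice volumes) times $r^{d-1}$, not like $r^i$; for example $d=4$, $h(a)=(1,0,\ldots)$ gives $g_1(a^{\langle r\rangle})=C(r-1,4,r)-1\sim r^3/6$. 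A corrected analysis would still show that the Kruskal--Katona upper bound eventually dominates, but only for large $r$; likewise the nonnegativity of $g_i(a^{\langle r\rangle})$ for all $r\geq\max(d,\lambda)$, which you also need, is an exact statement (\ref{lem:nonnegative}(i)) that asymptotics cannot deliver.

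Two further points. You never actually invoke Cohen--Macaulayness: the input $h(A)\geq 0$, which your ``non-negative $h(a)$'' normalization presupposes, is exactly what CM buys (via Bruns--Herzog, Cor.\ 4.1.10), and in (ii) you silently replace the corollary's hypothesis $\chi(A)\geq 0$ by the theorem's hypothesis of eventually positive coefficients, which for a $d$-dimensional algebra needs the (easy, but unstated) fact that $\dim_k A_n>0$ for $n\gg 0$. Finally, in (ii) the crucial claim that the numerator of a high Veronese of $b_2(t)/(1-t)^d$ eventually has nonnegative coefficients is only asserted (``should imply''); the paper obtains it from the Beck--Stapledon theorem together with Theorem 1.1 of Brenti--Welker, and one should also note that $b_2(1)=e(A)>0$ so that the leading asymptotics do not collapse. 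Your identity $a^{\langle r\rangle}(t)=\chi(a)+\bigl(b_2(t)/(1-t)^d\bigr)^{\langle r\rangle}$ for $r>\deg b_1$ and the use of $(a^{\langle r\rangle})^{\langle s\rangle}=a^{\langle rs\rangle}$ are correct and mirror the paper's reduction of (ii) to (i).
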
 

  As a second application of our results, we can deduce properties of $h$- and $g$-vectors of the 
  edgewise subdivision of a simplicial complex. Since the definition of edgewise subdivision requires some preparation we postpone the
  formulation of these results till \ref{sec:edgewise}. In \ref{sec:further} we formulate further consequences of \ref{theorem:main} and
  some questions. \ref{sec:proof} contains the proof of \ref{theorem:main} which will be based on a detailed analysis of the transformation 
  of the $h$-vector under the Veronese transformation given in \ref{sec:analysis}.

\section{Analysis of the $h$-vector transformation}
  \label{sec:analysis}

  In \cite{BW-Veronese} it is shown that the map $h(a) \mapsto h(a^{\langle r \rangle})$ is 
  given by a linear transformation with positive integer coefficients.
  The coefficients are defined as follows. For integers $r \geq 0$, $d \geq 1$ and $i$, let
  \begin{equation*}  
    \fC(r,d,i):=\left\{(u_1,\ldots,u_d)\in\ZZ^d~\middle|~\substack{u_1+\cdots+u_d=i,\\0 \leq u_l\leq r\text{ for }1\leq l\leq d}\right\}.
  \end{equation*}
  Then set $C(r,d,i) := |\fC(r,d,i)|$ for $d\geq 1$ and $C(r,0,i):=\delta_{0,i}$.
 
  \begin{theorem} \cite[Cor. 1.2]{BW-Veronese}\label{transform2}
    Let $a(t) = \sum_{n \geq 0} a_n t^n = \frac{h_0(a)+h_1(a)t+\cdots +h_\lambda(a) t^\lambda}{(1-t)^d}$ and $h_\lambda(a) \neq 0$.
    Then for any $r\geq 1$ we have
    \begin{equation*}    
      a^{<r>}(t) =\frac{h_0(a^{<r>})+h_1(a^{<r>})t+\cdots +h_m(a^{<r>})t^m}{(1-t)^d},
    \end{equation*}
    where $m:=\max(\lambda,d)$ and 
    \begin{equation*}
      h_i^{<r>}=\sum_{j=0}^s C(r-1,d,ir-j)h_j(a)     
    \end{equation*}
    for $i=0,\ldots,m$.
  \end{theorem}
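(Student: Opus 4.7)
The plan is to rewrite the denominator so that extracting coefficients at multiples of $r$ becomes transparent. Using the factorization $1 - t^r = (1-t)(1 + t + \cdots + t^{r-1})$, I would start from
\begin{equation*}
    a(t) \;=\; \frac{h(t)}{(1-t)^d} \;=\; \frac{h(t)\cdot(1 + t + \cdots + t^{r-1})^d}{(1-t^r)^d},
\end{equation*}
so that the new denominator is a power series purely in $t^r$. Setting $H(t) := h(t)(1 + t + \cdots + t^{r-1})^d = \sum_{k} c_k t^k$ and using $(1-t^r)^{-d} = \sum_{m \geq 0} \binom{m+d-1}{d-1} t^{mr}$, the Cauchy product yields $a_n = \sum_{m \geq 0} c_{n-mr}\binom{m+d-1}{d-1}$. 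Specializing to $n = ir$ and summing over $i$ produces the clean factorization
\begin{equation*}
  a^{\langle r \rangle}(t) \;=\; \sum_{i \geq 0} a_{ir}\, t^i \;=\; \Bigl(\sum_{i \geq 0} c_{ir}\, t^i\Bigr) \cdot \frac{1}{(1-t)^d},
\end{equation*}
which identifies the numerator of $a^{\langle r \rangle}(t)$ over $(1-t)^d$ as the polynomial $H^{\langle r \rangle}(t) := \sum_{i \geq 0} c_{ir}\, t^i$, i.e., the polynomial obtained by keeping every $r$-th coefficient of $H(t)$.

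The coefficient formula now follows by direct expansion. By the very definition of the $C(r-1,d,\cdot)$ quantities one has $(1+t+\cdots+t^{r-1})^d = \sum_k C(r-1,d,k)\, t^k$, so extracting $[t^{ir}]$ from $h(t)(1+t+\cdots+t^{r-1})^d$ gives
\begin{equation*}
    h_i(a^{\langle r \rangle}) \;=\; c_{ir} \;=\; \sum_{j} C(r-1,d,\,ir-j)\, h_j(a),
\end{equation*}
with the convention $h_j(a) = 0$ for $j > \lambda$ enforcing the upper summation limit. For the degree bound $m = \max(\lambda,d)$, I would observe that $H(t)$ has degree $\lambda + d(r-1)$, so $H^{\langle r \rangle}(t)$ has degree at most $\lfloor d + (\lambda-d)/r \rfloor$, which never exceeds $\max(\lambda,d)$ for any $r \geq 1$.

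There is no real obstacle here; the whole argument rests on the algebraic identity $1 - t^r = (1-t)(1+t+\cdots+t^{r-1})$ combined with the observation that $(1-t^r)^{-d}$ is supported on exponents divisible by $r$, so that the $r$-th Veronese operation factors cleanly through a coefficient-selection on the auxiliary polynomial $H(t)$. The only careful bookkeeping concerns the convolution indices in the Cauchy product and the elementary degree estimate at the end.
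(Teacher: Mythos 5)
Your argument is correct and complete: the identity $1-t^r=(1-t)(1+t+\cdots+t^{r-1})$, the observation that $(1-t^r)^{-d}$ is supported on exponents divisible by $r$, and the fact that $(1+t+\cdots+t^{r-1})^d=\sum_k C(r-1,d,k)\,t^k$ (immediate from the definition of $\fC(r-1,d,k)$) give exactly the stated coefficient formula, and your degree estimate $\deg H^{\langle r\rangle}\leq d+(\lambda-d)/r\leq\max(\lambda,d)$ checks out. Note that the paper does not prove this statement at all; it is quoted from \cite[Cor.~1.2]{BW-Veronese}, so there is no internal proof to compare against -- your self-contained generating-function derivation is the natural one and is consistent with the cited source. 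One cosmetic point: the upper summation limit ``$s$'' in the statement is a typo (it should be $\lambda$, or equivalently unrestricted under the convention $h_j(a)=0$ for $j>\lambda$), which your convention already handles correctly.
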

  
  In the following we focus on properties of the numbers $C(r,d,i)$. 
  As a first result, we show that they exhibit a certain symmetry and satisfy a recurrence relation.

  \begin{lemma} \label{lemma:rec}
    \begin{itemize}
      \item[(i)] For $r\geq 1$, $d\geq 0$ and $0\leq i,j\leq d$ it holds that
        \begin{equation*}
          C(r,d,i)=C(r,d,dr-i).
        \end{equation*}
      \item[(ii)] For $r\geq 1$, $d\geq 1$, $i\geq 0$ it holds that
        \begin{equation*}
          C(r,d,i)=\sum_{m=0}^{r}C(r,d-1,i-m).
        \end{equation*} 
    \end{itemize}
  \end{lemma}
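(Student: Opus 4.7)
The plan is to prove both parts by straightforward combinatorial bijections on the defining set $\fC(r,d,i)$, rather than through generating function manipulations. Both statements are really statements about lattice points in the hypercube $[0,r]^d$ with prescribed coordinate sum, so bijective reasoning should be most natural.

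For part (i), I would exhibit an explicit involution on $\fC(r,d,\cdot)$ that sends the slice of sum $i$ to the slice of sum $dr-i$. The obvious candidate is the complementation map
\begin{equation*}
 \varphi : (u_1,\ldots,u_d) \longmapsto (r-u_1,\ldots,r-u_d).
\end{equation*}
One checks that each coordinate $r-u_l$ still lies in $\{0,1,\ldots,r\}$, so $\varphi$ maps $\ZZ^d$ tuples satisfying the box constraints to $\ZZ^d$ tuples satisfying the box constraints; and the coordinate sum transforms as $\sum_l (r-u_l) = dr - \sum_l u_l = dr - i$. Since $\varphi$ is its own inverse it is a bijection, giving the equality of cardinalities claimed in (i). The case $d=0$ is immediate from the convention $C(r,0,i) = \delta_{0,i}$.

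For part (ii), I would simply partition $\fC(r,d,i)$ according to the value of the last coordinate $u_d$. For any fixed $m \in \{0,1,\ldots,r\}$, the tuples $(u_1,\ldots,u_d) \in \fC(r,d,i)$ with $u_d = m$ are in bijection with the tuples $(u_1,\ldots,u_{d-1}) \in \fC(r,d-1,i-m)$ via forgetting the last coordinate. Taking cardinalities and summing over $m$ yields
\begin{equation*}
 C(r,d,i) \;=\; \sum_{m=0}^{r} C(r,d-1,i-m).
\end{equation*}
When $d=1$ this still makes sense using $C(r,0,i-m) = \delta_{0,i-m}$, which correctly recovers $C(r,1,i) = \mathbf{1}_{0 \le i \le r}$.

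I do not expect a serious obstacle here; both parts follow from elementary bijective arguments, and the only small subtlety is handling the base case $d=0$ in part (ii) so that the recurrence is consistent with the convention $C(r,0,i) = \delta_{0,i}$ introduced just before the lemma.
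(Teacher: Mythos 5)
Your proposal is correct and follows essentially the same route as the paper: the complementation bijection $(u_1,\ldots,u_d)\mapsto(r-u_1,\ldots,r-u_d)$ for (i) and partitioning $\fC(r,d,i)$ by one coordinate for (ii) (the paper fixes the first coordinate, you fix the last — an immaterial difference). Your extra care with the $d=0$ and $d=1$ conventions is a harmless refinement of the same argument.
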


  \begin{proof}
    \begin{itemize}
      \item[(i)] 
        Consider the map 
        $$
          \Phi : \Big\{
          \begin{array}{ccc}
            \fC(r,d,i)       & \rightarrow & \fC(r,d,dr-i)\\
            (u_1,\ldots,u_d) & \mapsto     & (r-u_1,\ldots,r-u_d).
          \end{array}
        $$
        This map is easily seen to be a bijection between the two given sets. Now the claim follows from the definition of $C(r,d,i)$.
      \item[(ii)] Let $(u_1,\ldots,u_d)\in\fC(r,d,i)$. 
        Then $u_1=m$ for some $0\leq m\leq r$ and $(u_2,\ldots,u_d)\in \fC(r,d-1,i-m)$.
        This implies the claimed recursion.
    \end{itemize}
  \end{proof}
 
  \smallskip 

  The following example illustrates the symmetry stated in the last lemma.

  \begin{example}\label{Ex:C}
    For $r\geq d\geq 1$ let $C^{r,d}=(C(r-1,d,ir-j))_{\substack{0\leq i\leq d\\0\leq j\leq r-1}}$ be the matrix which 
    describes the $h$-vector transformation from \ref{transform2}.  
    For $r=9$ and $d=4$, we obtain 
    \begin{eqnarray*}
      C^{9,4}=
        \begin{pmatrix} 
           1   &     0  &      0  &    0   &     0  &     0   &      0   &      0   &     0\\
         216   &    165 &     120 &    84  &     56 &     35  &      20  &      10  &     4\\                                                      
         456   &    480 &     489 &   480  &    456 &    420  &     375  &      324 &    270\\
          56   &    84  &     120 &   165  &    216 &    270  &     324  &      375 &    420 \\
           0   &     0  &      0  &    0   &     1  &     4   &      10  &      20  &     35 \\
        \end{pmatrix}.
    \end{eqnarray*}
    Note that the submatrix of $C^{9,4}$ which consists of the last four rows and columns is symmetric. 
    More generally, a similar argument as in the proof of \ref{lemma:rec} (i) shows that we have
    \begin{equation}\label{sym:ext}
      C(r-1,d,ir-j)=C(r-1,d,(d+1-i)r-(r-(j-d)))
    \end{equation}
    for $j\geq d+1$.
  \end{example}

  To simplify notation we need a few definitions.

  \begin{definition}
    Let $d\geq 1$, $r\geq 1$ and $k \geq 0$ be integers.
    \begin{itemize}
      \item[(i)] For $0 \leq k \leq r-1$ let $C^{r,d}_k \in \ZZ^{d+1}$ be the vector whose $(i+1)$\textsuperscript{st} entry equals 
         $C(r-1,d,ir-k)$ for $0\leq i\leq d$. Equivalently, $C^{r,d}_k$ is the transpose of the
         $(k+1)$\textsuperscript{st} column of $C^{r,d}$.
         For $k \geq r$ we set $C^{r,d}_k \in \ZZ^{d+1}$ to be the all zero vector. 
      \item[(ii)] For $0 \leq k \leq r-1$ let $\hat{g}^{r,d}_k=(g_0,g_1,\ldots,g_{\lfloor\frac{d}{2}\rfloor+1})\in\ZZ^{\lfloor\frac{d}{2}\rfloor+2}$ 
         be the vector defined by $g_0:=C(r-1,d,0\cdot r-k) = \delta_{0,k}$ and $g_i:=C(r-1,d,i\cdot r-k)-C(r-1,d,(i-1)\cdot r-k)$ 
         for $1\leq i\leq \lfloor\frac{d}{2}\rfloor+1$. 
         For $k \geq r$ we set $\hat{g}^{r,d}_k \in \ZZ^{\lfloor\frac{d}{2}\rfloor+2}$ to be the all zero vector. 
      \item[(iii)] 
         Let $g^{r,d}_k\in\mathbb{Z}^{\lfloor\frac{d}{2}\rfloor+1}$ be the vector obtained from $\hat{g}^{r,d}_k$ by deleting its 
         last entry.
    \end{itemize}
  \end{definition}

  For example in \ref{Ex:C} we have $C^{9,4}_1=(0,165,480,84,0)$. 
  We can rewrite the $h$-vector and induced $g$-vector transformation for power series of the form \eqref{eq:powerseries} 
  in terms of the vectors $C^{r,d}_k$ and $g^{r,d}_k$.

  \begin{remark} \label{rem:g-transform}
    Let 
    \begin{equation*}
      a(t) =\frac{h_0+h_1(a)t+\cdots +h_\lambda(a) t^\lambda}{(1-t)^d}.
    \end{equation*} 
    be a rational series with $h_\lambda(a) \neq 0$.  
    For $r\geq 1$ we have

    \begin{center}
      \begin{tabular}{ccc} 
          $\displaystyle{h(a^{\langle r\rangle})=\sum_{k=0}^\lambda h_k(a)C^{r,d}_k}$
        & 
          ~~and~~
        & 
        $\displaystyle{g(a^{\langle r\rangle})=\sum_{k=0}^\lambda h_k(a)g^{r,d}_k}.$
      \end{tabular}
    \end{center}
  \end{remark}

  For $b\in \ZZ$ and a vector $v=(v_1,\ldots,v_d)\in\ZZ^d$ we use the notation
  \begin{equation*}
    (b,v) :=(b,v_1,\ldots,v_d)\qquad \mbox{and} \qquad (v,b):=(v_1,\ldots,v_d,b).
  \end{equation*}
  Moreover, we denote by $\last(v)$ the rightmost entry of $v$.
  We record some first observations about the vectors $g^{r,d}_k$ and $\hat{g}^{r,d}_k$. 

  \begin{lemma} \label{lemma:sym1}
    Let $1\leq d\leq r$ and $0\le k\leq d$ be integers. 
    \begin{itemize}
      \item[(i)] If $d$ is odd, then $\last(\hat{g}^{r,d}_k)+\last(\hat{g}^{r,d}_{d-k})=0$.
      \item[(ii)] If $d$ is even, then $\last(\hat{g}^{r,d}_k)+\last(g^{r,d}_{d-k})=0$.
    \end{itemize}
  \end{lemma}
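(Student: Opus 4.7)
The plan is to unpack $\last(\hat{g}^{r,d}_k)$ and $\last(g^{r,d}_{d-k})$ directly from the definition and then apply the symmetry of the numbers $C(r-1,d,i)$ established in \ref{lemma:rec}(i), which in the form we will use reads $C(r-1,d,i)=C(r-1,d,d(r-1)-i)=C(r-1,d,dr-d-i)$. The reason the two parts of the lemma must be treated separately is that the last entry of $g^{r,d}_{d-k}$ is one position earlier than the last entry of $\hat{g}^{r,d}_{d-k}$, and the parity of $d$ controls whether the last entry of $\hat{g}^{r,d}_k$ lands on the ``symmetric'' side of this shift.

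For part (i), assume $d$ is odd, so $\lfloor d/2\rfloor+1=(d+1)/2$. By definition,
\begin{equation*}
   \last(\hat{g}^{r,d}_k)=C\bigl(r-1,d,\tfrac{d+1}{2}r-k\bigr)-C\bigl(r-1,d,\tfrac{d-1}{2}r-k\bigr),
\end{equation*}
and the analogous formula with $k$ replaced by $d-k$ gives $\last(\hat{g}^{r,d}_{d-k})$. Now apply the identity $C(r-1,d,i)=C(r-1,d,dr-d-i)$ to each of the two terms defining $\last(\hat{g}^{r,d}_{d-k})$: substituting $i=\tfrac{d+1}{2}r-(d-k)$ produces $\tfrac{d-1}{2}r-k$, and substituting $i=\tfrac{d-1}{2}r-(d-k)$ produces $\tfrac{d+1}{2}r-k$. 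Thus the two $C$-values defining $\last(\hat{g}^{r,d}_{d-k})$ are exactly the same as the two defining $\last(\hat{g}^{r,d}_k)$ but appearing with swapped signs, which yields $\last(\hat{g}^{r,d}_k)+\last(\hat{g}^{r,d}_{d-k})=0$.

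For part (ii), assume $d$ is even, so $\lfloor d/2\rfloor+1=d/2+1$ and
\begin{equation*}
   \last(\hat{g}^{r,d}_k)=C\bigl(r-1,d,(\tfrac{d}{2}+1)r-k\bigr)-C\bigl(r-1,d,\tfrac{d}{2}r-k\bigr).
\end{equation*}
Since $g^{r,d}_{d-k}$ is $\hat{g}^{r,d}_{d-k}$ with its final coordinate deleted, its last entry is the $\lfloor d/2\rfloor$-th coordinate of $\hat{g}^{r,d}_{d-k}$, namely
\begin{equation*}
   \last(g^{r,d}_{d-k})=C\bigl(r-1,d,\tfrac{d}{2}r-(d-k)\bigr)-C\bigl(r-1,d,(\tfrac{d}{2}-1)r-(d-k)\bigr).
\end{equation*}
Applying $C(r-1,d,i)=C(r-1,d,dr-d-i)$ to each $C$-term on the right transforms $\tfrac{d}{2}r-(d-k)$ into $\tfrac{d}{2}r-k$ and $(\tfrac{d}{2}-1)r-(d-k)$ into $(\tfrac{d}{2}+1)r-k$. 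Hence $\last(g^{r,d}_{d-k})$ equals $-\last(\hat{g}^{r,d}_k)$, completing the proof.

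The ``obstacle,'' if any, is purely bookkeeping: one has to keep track of the off-by-one shift between $\hat{g}$ and $g$ so that the correct two $C$-entries are picked for each parity, and then recognize that the arithmetic of the index shift $i\mapsto dr-d-i$ exactly exchanges the two entries entering $\last(\hat{g}^{r,d}_k)$ with those entering $\last(\hat{g}^{r,d}_{d-k})$ (resp.\ $\last(g^{r,d}_{d-k})$). The hypothesis $d\leq r$ is only needed to guarantee that the relevant indices remain in the range where the convention defining $\hat{g}^{r,d}_k$ is given by the formula rather than by the all-zero convention.
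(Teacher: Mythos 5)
Your computation is correct and is essentially the paper's own argument: the paper simply defers to Murai's Lemma 2.7, whose steps are exactly this unpacking of the last entries and their cancellation via the symmetry $C(r-1,d,i)=C(r-1,d,d(r-1)-i)$ from \ref{lemma:rec}~(i). One small caveat about your closing remark: $d\le r$ only gives $k,\,d-k\le r$ rather than $\le r-1$, so in the boundary case $d=r$, $k\in\{0,d\}$ one of the two vectors falls under the paper's all-zero convention instead of the defining formula (and with that convention the identity literally fails, e.g.\ $d=r=2$, $k=0$, where $\last(\hat{g}^{2,2}_0)=-1$ while $\last(g^{2,2}_2)=0$); this is a defect of the stated convention affecting the paper's formulation as much as your proof, and it is harmless in practice since the lemma is only ever invoked with parameters $d-1<r$, where your formula-based computation applies verbatim.
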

  \begin{proof}
    The proof follows exactly the same steps as the proof of Lemma 2.7 in \cite{MuraiManifold} except
    of the use of \ref{lemma:rec} instead of \cite[Lem. 2.5 (ii)]{MuraiManifold}.
  \end{proof}

  The next lemma provides some helpful recursions for the vectors $C^{r,d}_k$, $g^{r,d}_k$ and $\hat{g}^{r,d}_k$.

  \begin{lemma}
    Let $1\leq d\leq r$ and $0\le k\leq d$ be integers. Then:
    \begin{itemize}
      \item[(i)] 
        \begin{equation*}
          C^{r,d}_k = \sum_{j=0}^{k-1}(0,C^{r,d-1}_j)+\sum_{j=k}^{r-1}(C^{r,d-1}_{j},0).
        \end{equation*}
      \item[(ii)]
        If $d$ is even, then 
        \begin{equation}\label{eq:even1}
          g^{r,d}_k=\sum_{l=0}^{k-1}(0,g^{r,d-1}_l)+\sum_{l=k}^{r-1}\hat{g}^{r,d-1}_l.
        \end{equation}
        In particular, 
        \begin{equation}
          \last(g^{r,d}_k)=\sum_{l=0}^{k-1}\last(g^{r,d-1}_l)+\sum_{l=k}^{r-1}\last(\hat{g}^{r,d-1}_l).
        \end{equation}
      \item[(iii)] If $d$ is odd, then
        \begin{equation}\label{eq:odd1}
          \hat{g}^{r,d}_k=\sum_{l=0}^{k-1}(0,g^{r,d-1}_l)+\sum_{l=k}^{r-1}\hat{g}^{r,d-1}_l.
        \end{equation}
        In particular,
        \begin{equation}\label{eq:odd2}
          \last(\hat{g}^{r,d}_k)=\sum_{l=0}^{k-1}\last(g^{r,d-1}_l)+\sum_{l=k}^{r-1}\last(\hat{g}^{r,d-1}_l).
        \end{equation}
    \end{itemize}
  \end{lemma}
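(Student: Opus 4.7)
My plan is to prove (i) by a direct coordinate-wise comparison built on the recursion of \ref{lemma:rec}(ii), and then to derive (ii) and (iii) from (i) by applying the successive-difference operator that defines $\hat{g}^{r,d}_k$ and $g^{r,d}_k$ out of $C^{r,d}_k$.

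For (i), I would fix $0\le i\le d$ and verify that the $(i+1)$-th entries of both sides agree. The left entry $C(r-1,d,ir-k)$ is expanded by \ref{lemma:rec}(ii) as $\sum_{m=0}^{r-1}C(r-1,d-1,ir-k-m)$, and the crucial manipulation is to split this sum at $m=r-k$. Reindexing $j=m+k$ on the part $m\in\{0,\ldots,r-k-1\}$ produces $\sum_{j=k}^{r-1}C(r-1,d-1,ir-j)$, which is the contribution of $\sum_{j=k}^{r-1}(C^{r,d-1}_j,0)$ at position $i+1$; reindexing $j=m+k-r$ on the part $m\in\{r-k,\ldots,r-1\}$ produces $\sum_{j=0}^{k-1}C(r-1,d-1,(i-1)r-j)$, which is the contribution of $\sum_{j=0}^{k-1}(0,C^{r,d-1}_j)$ at position $i+1$. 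This takes care of all interior positions $1\le i\le d-1$. For the boundary positions $i=0$ and $i=d$ the two sides collapse to $\delta_{0,k}$ and $\delta_{d,k}$, which one checks separately, using the palindromicity of \ref{lemma:rec}(i) to identify $C(r-1,d,dr-k)$ with $C(r-1,d,k-d)$ in the $i=d$ case.

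For (ii) and (iii), I would apply (i) to each of the two $C$-values in the defining difference $C(r-1,d,ir-k)-C(r-1,d,(i-1)r-k)$. Inside the sum $\sum_{j=k}^{r-1}$ the difference collapses to $C(r-1,d-1,ir-j)-C(r-1,d-1,(i-1)r-j)$, which is exactly the $(i+1)$-th entry of $\hat{g}^{r,d-1}_j$; inside $\sum_{j=0}^{k-1}$ it collapses to $C(r-1,d-1,(i-1)r-j)-C(r-1,d-1,(i-2)r-j)$, which is the $i$-th entry of $\hat{g}^{r,d-1}_j$ and hence the $(i+1)$-th entry of the shifted vector $(0,g^{r,d-1}_j)$ (the last entry of $\hat{g}^{r,d-1}_j$ is discarded by the shift, so it is harmless to replace $\hat{g}^{r,d-1}$ by $g^{r,d-1}$ on this half). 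Whether the identity terminates on $\hat{g}^{r,d}_k$ or on $g^{r,d}_k$ is forced by length compatibility with the right-hand vectors: for odd $d$ both types of summand have length $\lfloor d/2\rfloor+2$, matching $\hat{g}^{r,d}_k$; for even $d$ they have length $\lfloor d/2\rfloor+1$, matching $g^{r,d}_k$.

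The main obstacle I expect is the low-position bookkeeping. At $i=1$, the shifted vector $(0,g^{r,d-1}_l)$ contributes its leading entry $\delta_{0,l}$, whose summation over $l\in\{0,\ldots,k-1\}$ gives $1-\delta_{0,k}$, and this must be reconciled with the boundary value $-C(r-1,d,-k)=-\delta_{0,k}$ coming from the $i=1$ difference in order to reproduce $C(r-1,d,r-k)-\delta_{0,k}$ exactly. The position $i=0$ is a separate check where the right-hand side reduces to $\sum_{l=k}^{r-1}\delta_{0,l}=\delta_{0,k}$, matching $g_0$. Once these two low-position cases are verified for both parities of $d$, the remaining coordinates follow by the uniform substitution sketched above, so the boundary bookkeeping is the only genuinely delicate part of turning this outline into a proof.
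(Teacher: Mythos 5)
Your proposal is correct and takes essentially the same route as the paper: the paper proves (i) as a direct rephrasing of \ref{lemma:rec} (ii) (your splitting of $\sum_{m=0}^{r-1}C(r-1,d-1,ir-k-m)$ at $m=r-k$ and reindexing), and obtains (ii) and (iii) by exactly the differencing of entries you describe, with the parity of $d$ only governing whether the resulting vector has the length of $\hat{g}^{r,d}_k$ or of $g^{r,d}_k$. Your detailed boundary checks at the positions $i=0$, $i=1$ and $i=d$ simply spell out what the paper dismisses as a straightforward computation, and they work out as you claim.
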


  \begin{proof}
   (i) is a rephrasing of the recursion given in \ref{lemma:rec} (ii). Part (ii) and (iii) follow by a straightforward computation from 
   the definition of the vectors $g^{r,d}_k$ and $\hat{g}^{r,d}_k$ and \ref{lemma:rec} (ii).
  \end{proof}

  We will further need the following technical lemma, which analyzes the behavior of the last entries of the vectors $\hat{g}^{r,d}_k$ 
  for $k\geq d+1$ (as opposed to \ref{lemma:sym1} where this is achieved for $0\leq k\leq d$).

  \begin{lemma}\label{lemma:symG}
    Let $1\leq d\leq r$ and $d+1\leq k\leq r-1$ be integers.
    \begin{itemize}
      \item[(i)] If $d$ is even, then $\last(\hat{g}^{r,d}_k)+\last(\hat{g}^{r,d}_{r-(k-d)} )=0$.
      \item[(ii)] If $d$ is odd, then
        {\small
        \begin{eqnarray*} 
            \last(\hat{g}^{r,d}_k)+C(r-1,d,\frac{d+1}{2}r+k-d) & = & C(r-1,d, \frac{d-1}{2}r+k-d)
        \end{eqnarray*}
        }
    \end{itemize}
  \end{lemma}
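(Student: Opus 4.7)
My plan is to reduce both parts to the basic symmetry $C(r-1,d,i) = C(r-1,d,d(r-1)-i) = C(r-1,d,dr-d-i)$, which comes from \ref{lemma:rec}(i) (applied with $r$ replaced by $r-1$). The extended symmetry equation displayed in \ref{Ex:C} is really just a rewriting of this basic symmetry, so both parts will come out by computing $\last(\hat g^{r,d}_k)$ from the definition and then translating indices.

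Concretely, the first thing I would do is unpack what $\last(\hat g^{r,d}_k)$ actually is. Since $\hat g^{r,d}_k$ has $\lfloor d/2\rfloor+2$ entries indexed from $0$, its last entry sits in position $\lfloor d/2\rfloor+1$, so by the definition of $\hat g^{r,d}_k$
\begin{equation*}
\last(\hat g^{r,d}_k) \;=\; C\!\left(r-1,d,\big(\lfloor d/2\rfloor+1\big)r-k\right)-C\!\left(r-1,d,\lfloor d/2\rfloor\, r-k\right).
\end{equation*}
In the even case this is $C(r-1,d,(d/2+1)r-k)-C(r-1,d,(d/2)r-k)$, and in the odd case $C(r-1,d,\tfrac{d+1}{2}r-k)-C(r-1,d,\tfrac{d-1}{2}r-k)$.

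For part (i), I would set $k' := r-(k-d)$ and write out $\last(\hat g^{r,d}_{k'})$ by the same formula, obtaining
\begin{equation*}
\last(\hat g^{r,d}_{k'}) \;=\; C\!\left(r-1,d,(d/2)r+k-d\right)-C\!\left(r-1,d,(d/2-1)r+k-d\right).
\end{equation*}
Now I apply the basic symmetry $C(r-1,d,\cdot)=C(r-1,d,dr-d-\cdot)$ to each summand: the first becomes $C(r-1,d,(d/2)r-k)$ and the second becomes $C(r-1,d,(d/2+1)r-k)$. Thus $\last(\hat g^{r,d}_{k'}) = -\last(\hat g^{r,d}_k)$, which is exactly (i). The hypothesis $d+1\le k\le r-1$ is what makes the arguments of $C(r-1,d,\cdot)$ lie in the legitimate range $[0,d(r-1)]$ so the symmetry can be invoked unambiguously.

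For part (ii), I would apply the same basic symmetry directly to the two terms of $\last(\hat g^{r,d}_k)$. A short calculation gives $C(r-1,d,\tfrac{d+1}{2}r-k) = C(r-1,d,\tfrac{d-1}{2}r+k-d)$ and $C(r-1,d,\tfrac{d-1}{2}r-k) = C(r-1,d,\tfrac{d+1}{2}r+k-d)$, so
\begin{equation*}
\last(\hat g^{r,d}_k) \;=\; C\!\left(r-1,d,\tfrac{d-1}{2}r+k-d\right)-C\!\left(r-1,d,\tfrac{d+1}{2}r+k-d\right),
\end{equation*}
which rearranges to the stated identity. The only thing to be careful about is keeping the index arithmetic straight and verifying the range $d+1\le k\le r-1$ keeps all arguments in the admissible interval; this is the main (mild) obstacle, as the calculation itself is routine once the definitions have been written out in this form.
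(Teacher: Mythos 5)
Your proof is correct and follows essentially the same route as the paper: the paper invokes the ``extended'' symmetry $C(r-1,d,ir-j)=C(r-1,d,(d+1-i)r-(r-(j-d)))$ from \ref{Ex:C}, which, as you observe, is just the basic symmetry $C(r-1,d,m)=C(r-1,d,d(r-1)-m)$ of \ref{lemma:rec}(i) in disguise, applied to the two terms of $\last(\hat g^{r,d}_k)$. The only minor quibble is your closing remark on the hypothesis: the symmetry itself holds for all arguments (both sides vanish outside $[0,d(r-1)]$); what $d+1\leq k\leq r-1$ really guarantees is that the index $r-(k-d)$ in part (i) lies in $\{0,\ldots,r-1\}$, so $\hat g^{r,d}_{r-(k-d)}$ is given by the defining formula rather than being the zero vector.
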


  \begin{proof}
    \begin{itemize}
       \item[(i)] 
         Let $d$ be even and $d+1\leq k\leq r-1$. By definition it holds that
         {\tiny 
         \begin{eqnarray*} 
           \last(\hat{g}^{r,d}_k) & = & C\left(r-1,d,(\frac{d}{2}+1)r-k\right)-C\left(r-1,d,\frac{d}{2}r-k\right)\\
                                  & = & C\left(r-1,d,\frac{d}{2}r-(r-(k-d))\right)-C\left(r-1,d,(\frac{d}{2}+1)r-(r-(k-d))\right)\\
                                  & = & -\last(\hat{g}^{r,d}_{r-(k-d)}),
         \end{eqnarray*}
         }
         where the second equality follows from the symmetry in \eqref{sym:ext}. 
         This shows (i).
       \item[(ii)] 
         Let $d$ be odd and $d+1\leq k\leq r-1$.
         Since $d$ is odd, it holds that $\lfloor\frac{d}{2}\rfloor=\frac{d-1}{2}$ and thus by definition
         {\tiny
         \begin{eqnarray*}
           \last(\hat{g}^{r,d}_k) & = & C\left(r-1,d,(\frac{d-1}{2}+1)r-k\right)-C\left(r-1,d,\frac{d-1}{2}r-k\right)\\
                                  & \stackrel{(\ref{sym:ext})}{=} & C\left(r-1,d,\frac{d+1}{2}r-(r-(k-d))\right)-C\left(r-1,d,\frac{d+3}{2}r-(r-(k-d))\right).
         \end{eqnarray*}
         }
         This finishes the proof of (ii).
    \end{itemize}
  \end{proof}

  \smallskip

  The following example provides a list of the vectors $C^{r,d}_k$, $g^{r,d}_k$ and $\hat{g}^{r,d}_k$ 
  for small values of $d$.

  \begin{example}\label{Ex:SmallValues}
    \begin{center}
      \begin{tabular}{|l|l|l|l|l|}
        \hline
        $\empty$ & $\empty$ & $C^{r,d}_k$ & $g^{r,d}_k$ & $\hat{g}^{r,d}_k$ \\
        \hline
        $d=1$    & $k=0$ & $(1,0)$ & $(1)$ & $(1,-1)$\\
        $\empty$ & $k=1$ & $(0,1)$ & $(0)$ & $(0,1)$\\ 
        $\empty$ & $k=2,\ldots, r-1$ & $(0,1)$ & $(0)$ & $(0,1)$\\
        \hline
        $d=2$    & $k=0$ &  $(1,r-1,0)$ & $(1,r-2)$ & $(1,r-2,-r+1)$\\
        $\empty$ & $k=1$ & $(0,r,0)$ & $(0,r)$   & $(0,r,-r)$\\
        $\empty$ & $k=2,\ldots, r-1$ & $(0,r-k+1,k-1)$ & $(0,r-k+1)$  & $(0,r-k+1,2k-r-2)$\\
        \hline
        $d=3$    & $k=0$ & $(1,7,1,0)$ & $(1,6)$ & $(1,6,-6)$ \\
        $r=3$    & $k=1$ & $(0,6,3,0)$ & $(0,6)$ & $(0,6,-3)$\\
        $\empty$ & $k=2$ & $(0,3,6,0)$ & $(0,3)$ & $(0,3,3)$ \\
        \hline
        $d=4$    & $k=0$ & $(1,31,31,1,0)$ & $(1,30,0)$ & $(1,30,0,-30)$ \\
        $r=4$    & $k=1$ & $(0,20,40,4,0)$ & $(0,20,20)$ & $(0,20,20,-36)$\\
        $\empty$ & $k=2$ & $(0,10,44,10,0)$ & $(0,10,34)$ & $(0,10,34,-34)$ \\
        $\empty$ & $k=3$ & $(0,4,40,20,0)$ & $(0,4,36)$ & $(0,4,36,-20)$\\
        \hline
      \end{tabular}
    \end{center}
  \end{example}

  Note that in the above example, the entries of the vectors $g^{r,d}_k$ are exclusively non-negative 
  whereas the last entry of $\hat{g}^{r,d}_k$ can also be negative. Our next aim is to prove that this is true in general.

  \begin{lemma}\label{lem:nonnegative}
    Let $1\leq d\leq r$ and $0\leq k\leq r-1$ be integers. Then:
    \begin{itemize}
      \item[(i)] $g^{r,d}_k$ is non-negative and $\last(g^{r,d}_k)=0$ if and only if $d$ is even, $d=r$ and $k=0$, or $d=1$ and $k > 0$. 
      \item[(ii)] If $d$ is odd, then $\last(\hat{g}^{r,d}_k)>0$ for $k\geq \frac{d}{2}$ and $\last(\hat{g}^{r,d}_k)<0$ for $k< \frac{d}{2}$.
      \item[(iii)] If $d$ is even, then $\last(\hat{g}^{r,d}_k)> 0$ for $k> \frac{d+r}{2}$ and $\last(\hat{g}^{r,d}_k)< 0$ for $k< \frac{d+r}{2}$.
        If $r$ is even, then $\last(\hat{g}^{r,d}_{\frac{d+r}{2}})=0$.
    \end{itemize}
  \end{lemma}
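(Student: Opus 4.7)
The plan is to derive all three parts of the lemma by a direct analysis of the differences of $C$-values appearing in the definitions of $g^{r,d}_k$ and $\hat{g}^{r,d}_k$. The central tool is the generating function identity $\sum_{i\ge 0} C(r-1,d,i)\,t^i = (1+t+\cdots+t^{r-1})^d$, which exhibits $(C(r-1,d,i))_i$ as the convolution of $d$ copies of the symmetric unimodal sequence $(1,1,\ldots,1)$ of length $r$. Consequently the sequence is symmetric about $M := d(r-1)/2$ (in agreement with \ref{lemma:rec}(i)) and unimodal on its support $\{0,1,\ldots,d(r-1)\}$, so for any $a,b$ in this range the sign of $C(r-1,d,a)-C(r-1,d,b)$ coincides with that of $|b-M|-|a-M|$; every statement in the lemma then reduces to comparing distances to $M$.

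For parts (ii) and (iii) I would write $\last(\hat{g}^{r,d}_k) = C(r-1,d,(\lfloor d/2 \rfloor + 1)r-k) - C(r-1,d,\lfloor d/2 \rfloor\,r-k)$ from the definition and apply \ref{lemma:rec}(i) to the first term so that both arguments can be compared on the same picture. A short computation then shows: for $d$ odd the resulting distances from $M$ equal $|k-(d+r)/2|$ and $|k-(d-r)/2|$, whose midpoint is $d/2$, so the sign is determined by $k$ versus $d/2$; for $d$ even the analogous distances are $|k-(2r+d)/2|$ and $|k-d/2|$, with midpoint $(d+r)/2$, yielding the dichotomy. The degenerate case $k=(d+r)/2$ in (iii) places the two arguments symmetrically about $M$, hence $\last(\hat{g}^{r,d}_k)=0$ by the symmetry of $C$; this $k$ is an integer precisely when $r$ is even.

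For part (i) I would apply the same method to $g_i = C(r-1,d,ir-k)-C(r-1,d,(i-1)r-k)$ for $1 \le i \le \lfloor d/2 \rfloor$. Setting $y := ir-k-M$, the condition $|y|\le|y-r|$ is equivalent to $y\le r/2$, i.e., $i \le \frac{d+1}{2} + \frac{2k-d}{2r}$, which is straightforward to verify for $1\le i\le \lfloor d/2\rfloor$ under $d \le r$ and $0\le k\le r-1$. This gives $g^{r,d}_k \ge 0$. Equality $g_i=0$ forces $y=r/2$, i.e., $k = \frac{(2i-d-1)r+d}{2}$. Specializing $i=\lfloor d/2 \rfloor$ and imposing $k \in \{0,\ldots,r-1\}$ singles out the exceptional cases: for $d$ even one gets $k=(d-r)/2$, hence $d=r$ and $k=0$; for $d$ odd and $d \ge 3$ the required $k = d/2-r$ is not an integer, while for $d=1$ the last entry reduces to $g_0 = \delta_{0,k}$, which handles that case trivially.

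The main obstacle is making the strict sign statements in (ii) and (iii) rigorous, because $(C(r-1,d,i))_i$ is only weakly unimodal in general (it has tied peaks when $d(r-1)$ is odd, and is constant on its support when $d=1$). I would handle this by verifying that the two arguments $a,b$ being compared always satisfy $(a+b)/2 = M-r/2 \neq M$, so they can never simultaneously occupy the two-element plateau centered at $M$ that occurs when $d(r-1)$ is odd; combined with boundary cases where one of $a,b$ falls outside the support of $C(r-1,d,\cdot)$ and hence vanishes, this suffices to upgrade every weak comparison to a strict one outside the equality cases already identified. Tracking these degenerate configurations yields exactly the equality conditions asserted in the lemma.
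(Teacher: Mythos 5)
Your route is genuinely different from the paper's. The paper proves (i)--(iii) simultaneously by induction on $d$, feeding the recursions \eqref{eq:even1} and \eqref{eq:odd1} into the symmetries of \ref{lemma:sym1} and \ref{lemma:symG}, with base cases from \ref{Ex:SmallValues}; along the way it extracts identities such as \eqref{eq:lastEntry} that are reused later in \ref{prop:growth}. You instead read everything off the fact that $(C(r-1,d,i))_i$ is the coefficient sequence of $(1+t+\cdots+t^{r-1})^d$, hence symmetric about $M=\frac{d(r-1)}{2}$ and unimodal, and reduce each sign statement to comparing distances to $M$. Your computations are correct: for $d$ odd the two relevant distances are $\bigl|k-\frac{d+r}{2}\bigr|$ and $\bigl|k-\frac{d-r}{2}\bigr|$ with midpoint $\frac{d}{2}$; for $d$ even they are $\bigl|k-\frac{2r+d}{2}\bigr|$ and $\bigl|k-\frac{d}{2}\bigr|$ with midpoint $\frac{d+r}{2}$, and for $k=\frac{d+r}{2}$ the two arguments are symmetric about $M$, giving the zero; the equality analysis in (i) correctly isolates the cases $d=r$ even with $k=0$ and $d=1$ with $k>0$. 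This direct argument is more self-contained and explains the equality cases conceptually, at the price of not producing the auxiliary formulas the paper needs downstream.

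There is, however, one unsupported step on which every strict inequality rests: you need \emph{strict} unimodality of $C(r-1,d,\cdot)$ for $d\geq 2$, i.e.\ that the sequence is strictly increasing on $\{0,\ldots,\lfloor\frac{d(r-1)}{2}\rfloor\}$, so that the only ties between support positions are the symmetric ones (in particular the central two-term plateau when $d(r-1)$ is odd). Your plan only addresses the central plateau; weak unimodality, which is what the standard ``product of symmetric unimodal polynomials'' argument gives, does not exclude plateaus away from the center, and with merely weak unimodality the inference from $|a-M|<|b-M|$ to $C(r-1,d,a)>C(r-1,d,b)$ fails. The missing fact is true and can be proved by a short induction on $d$ using
\begin{equation*}
  C(r-1,d,i)-C(r-1,d,i-1)\;=\;C(r-1,d-1,i)-C(r-1,d-1,i-r),
\end{equation*}
the coefficient identity coming from $(1-t)(1+\cdots+t^{r-1})^d=(1-t^r)(1+\cdots+t^{r-1})^{d-1}$: for $1\leq i\leq\lfloor\frac{d(r-1)}{2}\rfloor$ the midpoint $i-\frac{r}{2}$ of the two arguments lies strictly below $\frac{(d-1)(r-1)}{2}$, so the same distance comparison (together with the trivial case $d-1=1$ and the convention that out-of-support values vanish while in-support values are positive) gives strict positivity of the difference. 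Add this as a preliminary lemma; with it in place, and with the boundary checks you already mention (whenever one argument leaves the support $[0,d(r-1)]$, the other stays inside and contributes a positive value), your argument goes through.
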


  \begin{proof}
    For $d\leq 2$ the statements follow from \ref{Ex:SmallValues}.
    Let $d>2$. 

    \medskip

    \noindent {\sf Case:} $d$ even.

    \begin{itemize}
      \item[(i)] 
        By Equation \eqref{eq:even1} we can express $g^{r,d}_k$ as
        \begin{equation*}
          g^{r,d}_k=\sum_{l=k}^{r-1}\hat{g}^{r,d-1}_l+\sum_{l=0}^{k-1}(0,g^{r,d-1}_l).
        \end{equation*}
        From part (i) of the induction hypothesis we infer that all but the last entry of $g^{r,d}_k$ are non-negative. 
        The last entry is given as
        \begin{equation}\label{eq:last}
          \last(g^{r,d}_k)=\sum_{l=k}^{r-1}\last(\hat{g}^{r,d-1}_l)+\sum_{l=0}^{k-1}\last(g^{r,d-1}_l).
        \end{equation}

        We now distinguish the cases $k \leq \frac{d}{2}$ and $k< \frac{d}{2}$.

        \begin{itemize}
          \item[$\triangleright$] $k\geq \frac{d}{2}$. The induction hypothesis (ii) implies that $\last(\hat{g}^{r,d-1}_k)>0$. Since the first sum on the 
            right-hand side of \eqref{eq:last} contains at least one summand and since $\last(g^{r,d-1}_l)\geq 0$ by the induction hypothesis 
           (i) we conclude that $\last(g^{r,d}_k)>0$.
          \item[$\triangleright$] $k< \frac{d}{2}$. \ref{lemma:sym1} (i) implies that $\sum_{l=k}^{d-1-k}\last(\hat{g}^{r,d-1}_k)=0$. We therefore obtain

            \begin{equation*}
              \label{eq:lastlast}
              \last(g^{r,d}_k)=\sum_{l=d-k}^{r-1}\last(\hat{g}^{r,d-1}_l)+\sum_{l=0}^{k-1}\last(g^{r,d-1}_l).
            \end{equation*}

            Since $d-1 > 1$ and $l\geq \frac{d}{2}$ for $d-k\leq l$ and $k< \frac{d}{2}$, all summands in the first and the second sum on the right-hand 
            side of Equation \eqref{eq:last} are strictly positive by the induction hypothesis (ii) and (i), respectively. Hence, $\last(g^{r,d}_k)\geq 0$. 
            Furthermore, $\last(g^{r,d}_k)=0$ if and only if $d-k>r-1$ and $k=0$, equivalently $d>r-1$ and $k=0$, equivalently $d=r$ and $k=0$. 
        \end{itemize}

    \item[(iii)] 
       Let $k>\frac{d+r}{2}$.
       Using \ref{lemma:rec} (ii) and the definition of $\hat{g}^{r,d}_k$ we obtain that the last entry of $\hat{g}^{r,d}_k$ can be written as
       \begin{eqnarray*}
         \last(\hat{g}^{r,d}_k) & = & \sum_{l=0}^{k-1}\last(\hat{g}^{r,d-1}_l) + \\
                              &   & \sum_{l=k}^{r-1}\left(C(r-1,d-1,\frac{d+2}{2}r-l)-C(r-1,d-1,\frac{d}{2}r-l)\right)\\
                              & = & \sum_{l=0}^{d-1}\last(\hat{g}^{r,d-1}_l)+\sum_{l=d}^{d+r-k-1}\last(\hat{g}^{r,d-1}_l)+\sum_{l=d+r-k}^{k-1}\last(\hat{g}^{r,d-1}_l) +\\
                              &   & \sum_{l=k}^{r-1}\left(C(r-1,d-1,\frac{d+2}{2}r-l)-C(r-1,d-1,\frac{d}{2}r-l)\right).
      \end{eqnarray*} 
      By \ref{lemma:sym1} (i) it holds that $\sum_{l=0}^{d-1}\last(\hat{g}^{r,d-1}_l)=0$. \ref{lemma:symG} (ii) further implies
      \begin{equation*}
        \sum_{l=k}^{r-1}\Big(C(r-1,d-1,\frac{d+2}{2}r-l)-C(r-1,d-1,\frac{d}{2}r-l)\Big)=-\sum_{l=d}^{d+r-k-1}\last(\hat{g}^{r,d-1}_l).
      \end{equation*}
      The expression for $\last(\hat{g}^{r,d}_k)$ thus simplifies to
      \begin{equation*}
        \last(\hat{g}^{r,d}_k)=\sum_{l=d+r-k}^{k-1}\last(\hat{g}^{r,d-1}_l).
      \end{equation*}
      From the induction hypothesis (ii) we finally conclude that $\last(\hat{g}^{r,d}_k) \geq 0$.
      Furthermore, we have $\last(\hat{g}^{r,d}_k)=0$ if and only if $k-1<d+r-k$, i.e.,
      $k<\frac{d+r+1}{2}$. Since we also have $k\geq\frac{d+r}{2}$ the last condition is true
      if and only if $k=\frac{d+r}{2}$ and $r$ is even. 
      From the symmetry in \ref{lemma:symG} (i) it further follows that $\last(\hat{g}^{r,d}_k)<0$ for 
      $d+1\leq k<\frac{d+r}{2}$. Since, by \ref{lemma:sym1} (ii), $\last(\hat{g}^{r,d}_k)=-\last(g^{r,d}_{d-k})$ 
      for $0\leq k\leq d$ we deduce from (i) that $\last(\hat{g}^{r,d}_k)<0$ for $0\leq k\leq d$. 
    \end{itemize}

    \noindent{\sf Case:} $d$ odd.

    \begin{itemize}
      \item[(i)] Using \eqref{eq:odd1} we can write the vector $\hat{g}^{r,d}_k$ as 
        \begin{equation*}
          \hat{g}^{r,d}_k = \sum_{l=k}^{r-1}\hat{g}^{r,d-1}_l+\sum_{l=0}^{k-1}(0,g^{r,d-1}_l).
        \end{equation*}
        It follows from part (i) of the induction hypothesis that $g^{r,d}_k$ is non-negative. Since
        the first sum on the right-hand side of the last equation contains the term $\hat{g}^{r,d-1}_{r-1}$
        which by the induction hypothesis (iii) has a strictly positive last entry, we see that
        $\last(g^{r,d}_k)>0$. 
      \item[(ii)] 
        By \eqref{eq:odd2} we have
        \begin{equation}\label{eq:proof}
          \last(\hat{g}^{r,d}_k)=\sum_{l=k}^{r-1}\last(\hat{g}^{r,d-1}_l)+\sum_{l=0}^{k-1}\last(g^{r,d-1}_l).
        \end{equation}  
        Assume first that $k<d$. By \ref{lemma:symG} (i) it holds that $\sum_{l=d}^{r-1}\last(\hat{g}^{r,d-1}_l)=0$. 
        This yields
        \begin{eqnarray*} 
          \last(\hat{g}^{r,d}_k) & = & \sum_{l=k}^{d-1}\last(\hat{g}^{r,d-1}_l)+\sum_{l=0}^{d-1-k}\last(g^{r,d-1}_l)+\sum_{l=d-k}^{k-1}\last(g^{r,d-1}_l)\\
                                 & = & \sum_{l=d-k}^{k-1}\last(g^{r,d-1}_l),
        \end{eqnarray*}
        where the last equality holds by \ref{lemma:sym1} (ii). For $k\geq \frac{d}{2}$ the last sum consists
        at least of the summand $\last(g^{r,d-1}_{\frac{d-1}{2}})$ and the induction hypothesis (i) finally implies $\last(\hat{g}^{r,d}_k)>0$.
        This combined with \ref{lemma:sym1} (i) also shows that $\last(\hat{g}^{r,d}_k)<0$ for $0\leq k\leq \frac{d}{2}$.

        Now let $k\geq d$. The induction hypothesis (i) implies that $\sum_{l=0}^{k-1}\last(g^{r,d-1}_l)>0$. If $k>\frac{d-1+r}{2}$, then we infer from part (iii) of  
        the induction hypothesis that $\sum_{l=k}^{r-1}\last(\hat{g}^{r,d-1}_l)>0$ and the claim follows from \eqref{eq:proof}. 
        Assume that $k\leq \frac{d-1+r}{2}$. In this case we can write
        \begin{equation*}
          \sum_{l=k}^{r-1}\last(\hat{g}^{r,d-1}_l)=\sum_{l=k}^{r-k+d}\last(\hat{g}^{r,d-1}_l)+\sum_{l=r-k+d+1}^{r-1}\last(\hat{g}^{r,d-1}_l).
        \end{equation*}

        By \ref{lemma:symG} (i) it holds that $\sum_{l=k}^{r-k+d}\last(\hat{g}^{r,d-1}_l)=0$ and from the induction hypothesis (iii) we know
        that $\sum_{l=r-k+d+1}^{r-1}\last(\hat{g}^{r,d-1}_l)\geq 0$. The above reasoning together with \eqref{eq:proof} finally completes the proof.
    \end{itemize}
  \end{proof}

  The next lemma is concerned with the behavior of the vectors $g^{r,d}_k$ when $r$ grows.

  \begin{proposition}\label{prop:growth}
    Let $1\leq d\leq r$ and $0\leq k\leq r$ be integers. Then:
    \begin{itemize}
      \item[(i)] $g^{r,d}_k\leq g^{r+1,d}_k$ (componentwise).
      \item[(ii)] If $d$ is odd, then $\last(\hat{g}^{r+1,d}_k)\geq \last(\hat{g}^{r,d}_k)$ for $k\geq \frac{d}{2}$.
 \item[(iii)] If $d$ is even, then $\last(\hat{g}^{r+1,d}_{k+1})\geq \last(\hat{g}^{r,d}_k)$ for $k\geq d$.
    \end{itemize}
  \end{proposition}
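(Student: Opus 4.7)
The plan is to prove all three parts simultaneously by induction on $d$, with base cases $d=1$ and $d=2$ read off from \ref{Ex:SmallValues}. The inductive step splits according to the parity of $d$ and exploits the recursions \eqref{eq:even1} and \eqref{eq:odd1} together with the symmetries in \ref{lemma:sym1} and \ref{lemma:symG} to absorb the negative contributions carried by the $\hat{g}$-vectors.

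For part (i) with $d$ even, I would use \eqref{eq:even1} to write $g^{r+1,d}_k-g^{r,d}_k$ coordinatewise. In the first $\lfloor\frac{d}{2}\rfloor$ coordinates the inequality reduces to inductive hypothesis (i) applied to $d-1$, with the extra summand obtained from replacing $r$ by $r+1$ in the second sum handled by the non-negativity of $g^{r+1,d-1}_r$ from \ref{lem:nonnegative}(i). In the last coordinate the relevant quantities are $\last(\hat{g}^{r,d-1}_l)$; for $l\geq\frac{d-1}{2}$ these are monotone in $r$ by inductive hypothesis (ii), while if $k\leq\frac{d-1}{2}$ the block $\sum_{l=k}^{d-1-k}\last(\hat{g}^{r,d-1}_l)$ collapses to $0$ by \ref{lemma:sym1}(i), leaving only tail terms where the hypothesis applies. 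Parts (i) and (ii) in the odd-$d$ case follow the same template via \eqref{eq:odd1} and \eqref{eq:odd2}, now invoking \ref{lem:nonnegative}(iii) and \ref{lemma:symG}(i) on the sums of last entries.

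Part (iii) is the most delicate point because no analogue of \eqref{eq:even1} or \eqref{eq:odd1} gives $\hat{g}^{r,d}_k$ directly in the even case. Here I would follow the strategy used in the proof of \ref{lem:nonnegative}(iii) and expand
\[
\last(\hat{g}^{r,d}_k) = C\bigl(r-1,d,\tfrac{d+2}{2}r-k\bigr) - C\bigl(r-1,d,\tfrac{d}{2}r-k\bigr)
\]
via \ref{lemma:rec}(ii) into
\[
\last(\hat{g}^{r,d}_k) = \sum_{l=0}^{k-1}\last(\hat{g}^{r,d-1}_l) + \sum_{l=k}^{r-1}\Bigl(C\bigl(r-1,d-1,\tfrac{d+2}{2}r-l\bigr) - C\bigl(r-1,d-1,\tfrac{d}{2}r-l\bigr)\Bigr),
\]
together with the analogous identity for $\last(\hat{g}^{r+1,d}_{k+1})$. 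Subtracting the two and cancelling the symmetric null-blocks provided by \ref{lemma:symG}(ii) and \ref{lemma:sym1}(i), the surviving terms are either differences $\last(\hat{g}^{r+1,d-1}_l)-\last(\hat{g}^{r,d-1}_l)$ controlled by inductive hypothesis (ii) for $d-1$ odd, or extra summands handled by the sign information in \ref{lem:nonnegative}(ii). The main obstacle is the bookkeeping: after the cancellations one must verify that every surviving index $l$ lies in the range $l\geq\frac{d-1}{2}$ where the inductive hypothesis is available, and that the index shift $k\mapsto k+1$ lines up with $r\mapsto r+1$ so that the extra summand produced on the $r+1$ side is non-negative. A case split into the subranges $d\leq k\leq\frac{d+r}{2}$ and $k>\frac{d+r}{2}$, mirroring the sign behaviour in \ref{lem:nonnegative}(iii), is what keeps the inequalities sharp in each regime.
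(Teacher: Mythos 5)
Your plan is essentially the paper's own proof: a simultaneous induction on $d$ with base cases $d\leq 2$ from \ref{Ex:SmallValues}, the recursions \eqref{eq:even1} and \eqref{eq:odd1} together with \ref{lemma:sym1}, \ref{lemma:symG} and the sign information of \ref{lem:nonnegative} for parts (i) and (ii), and for part (iii) exactly the expansion $\last(\hat g^{r,d}_k)=\sum_{l=d+r-k}^{k-1}\last(\hat g^{r,d-1}_l)$ already obtained in the proof of \ref{lem:nonnegative}(iii), compared termwise with the same identity at $(r+1,k+1)$ via induction hypothesis (ii) and with the one extra summand absorbed by \ref{lem:nonnegative}(ii). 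The only deviation is cosmetic: the additional case split of $k$ against $\frac{d+r}{2}$ you anticipate in (iii) is not needed, since the shifted identity already aligns the index ranges.
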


  \begin{proof}
    We proceed by induction on $d$ and show all three parts of the lemma simultaneously. 
    For $d\leq 2$, \ref{Ex:SmallValues} verifies the assertion. Now let $d>2$.  

    \medskip

   \noindent {\sf Case:} $d$ even. 

    \begin{itemize}
      \item[(i)] 
 				By Equation \eqref{eq:even1} the vector $g^{r+1,d}_k$ can be computed in the following way:
    		\begin{equation*}
      		g^{r+1,d}_k = \sum_{l=0}^{k-1}(0,g^{r+1,d-1}_l)+\sum_{l=k}^{r}\hat{g}^{r+1,d-1}_l.
    		\end{equation*}
    		In the sequel, let $\bar{v}$ denote the vector which is obtained from a vector $v$ by deleting its last entry. 
    		It follows from the induction hypothesis (i) that
    		\begin{align*}
      		\bar{g}^{r+1,d}_k & \geq \sum_{l=0}^{k-1}\overline{(0,g^{r,d-1}_l)}+\sum_{l=k}^{r-1} g^{r,d-1}_l+g^{r+1,d-1}_r\\
                        		& =\bar{g}^{r,d}_k + g^{r+1,d-1}_r \geq \bar{g}^{r,d}_k,
    		\end{align*}
    		where the last inequality holds by \ref{lem:nonnegative} (i). 
    		It remains to show the desired inequality for the last entries of $g^{r+1,d}_k$ and $g^{r,d}_k$. First assume that 	  					$k\geq \frac{d}{2}$. 
    		In this case, it holds that
    		\begin{align*}
      		\last(g^{r+1,d}_k) & =\sum_{l=0}^{k-1}\last(g^{r+1,d-1}_l)+\sum_{l=k}^{r}\last(\hat{g}^{r+1,d-1}_l)\\
                        		 & \geq \sum_{l=0}^{k-1}\last(g^{r,d-1}_l) + \sum_{l=k}^{r-1}\last(\hat{g}^{r+1,d-1}_l) +  	\last(\hat{g}^{r+1,d-1}_r)\\
                         		 & \geq \sum_{l=0}^{k-1}\last(g^{r,d-1}_l) + \sum_{l=k}^{r-1}\last(\hat{g}^{r,d-1}_l) + \last(\hat{g}^{r+1,d-1}_r)\\
                         		 & \geq \sum_{l=0}^{k-1}\last(g^{r,d-1}_l) + \sum_{l=k}^{r-1}\last(\hat{g}^{r,d-1}_l)
    	\end{align*}
    	Here the first and the second inequality follow from the induction hypothesis (i) and (ii), respectively. The third							inequality is a consequence of \ref{lem:nonnegative} (ii). 
    	Next, let $k<\frac{d}{2}$. In this case, we infer from \ref{lemma:sym1} (i) that   								$\sum_{l=k}^{d-1-k}\last(\hat{g}^{r+1,d-1}_l)=0$. Combining 
    	this with the previous reasoning we can therefore conclude that
    	\begin{align*}
      	\last(g^{r+1,d}_k) & \geq \sum_{l=0}^{k-1}\last(g^{r,d-1}_l) + 		      \sum_{l=d-k}^{r-1}\last(\hat{g}^{r+1,d-1}_l)+\last(\hat{g}^{r+1,d-1}_r)\\
                         	 & \geq \sum_{l=0}^{k-1}\last(g^{r,d-1}_l) + \sum_{l=d-k}^{r-1}\last(\hat{g}^{r+1,d-1}_l),
    	\end{align*}
    	where the last inequality follows from \ref{lem:nonnegative} (ii). 
    	For $l\geq d-k$ and $k<\frac{d}{2}$ it holds that $l>\frac{d}{2}$. Hence, part (ii) of the induction hypothesis (ii) implies 
    	$\last(\hat{g}^{r+1,d-1}_l)\geq\last(\hat{g}^{r,d-1}_l)$ for $d-k\leq l\leq r-1$. We finally obtain 
    	\begin{align*}
      	\last(g^{r+1,d}_k) & \geq \sum_{l=0}^{k-1}\last(g^{r,d-1}_l)+\sum_{l=d-k}^{r-1}\last(\hat{g}^{r,d-1}_l)\\                     
                         & =\sum_{l=0}^{k-1}\last(g^{r,d-1}_l)+\sum_{l=k}^{r-1}\last(\hat{g}^{r,d-1}_l)\\
                         & =\last(g^{r,d-1}_k).
        \end{align*}
        For the second equality we use that by \ref{lemma:sym1} (i) we have  $\sum_{l=k}^{d-k-1}\last(\hat{g}^{r,d-1}_l)=0$. 

      \item[(iii)] We have shown in the proof of \ref{lem:nonnegative} (iii) that for $k\geq d$ the last entry of $\hat{g}^{r,d}_k$ can be expressed as 
	\begin{equation}\label{eq:lastEntry}
		\last(\hat{g}^{r,d}_k)=\sum_{l=d+r-k}^{k-1}\last(\hat{g}_l^{r,d-1}). 
	\end{equation}
	Using part (ii) of the induction hypothesis and \ref{lem:nonnegative} (ii) we deduce
	\begin{align*}
	\last(\hat{g}^{r,d}_k)&\leq \sum_{l=d+r-k}^{k-1}\last(\hat{g}_l^{r+1,d-1})\\
	&\leq \sum_{l=d+r-k}^{k-1}\last(\hat{g}_l^{r+1,d-1})+\last(\hat{g}^{r+1,d-1}_k)\\
	&=\last(\hat{g}^{r+1,d}_{k+1}),
	\end{align*}
	where the last equality holds by \eqref{eq:lastEntry}.
    \end{itemize}

    \noindent {\sf Case:} $d$ odd. 
    \begin{itemize}
       \item[(i)] 
         It follows from \eqref{eq:odd1} that the vector $g^{r+1,d}_k$ can be computed as
         \begin{align*}
            g^{r+1,d}_k & =\sum_{l=0}^{k-1}\overline{(0,g^{r+1,d-1}_l)}+\sum_{l=k}^{r} g^{r+1,d-1}_l \\
                        & \geq \sum_{l=0}^{k-1}\overline{(0,g^{r,d-1}_l)} + \sum_{l=k}^{r-1}g^{r,d-1}_l + g^{r+1,d-1}_r \\
                        & \stackrel{\eqref{eq:odd1}}{=} g^{r,d-1}_k + g^{r+1,d-1}_ r \\
                        & \geq g^{r,d}_k.
         \end{align*}
         Here, the first and the last inequality follow from the induction hypothesis (i) and \ref{lem:nonnegative} (i), respectively. 

       \item[(ii)] 
         First, suppose that $\frac{d}{2}\leq k<d$. 
         We have shown in the proof of \ref{lem:nonnegative} (ii) that the last entry of $\hat{g}^{r,d}_k$ can be written as
         \begin{equation*}
           \last(\hat{g}^{r,d}_k)=\sum_{l=d-k}^{k-1} \last(g^{r,d-1}_l).
         \end{equation*}
         Using this expression and part (i) of the induction hypothesis we obtain
         \begin{equation*}
           \last(\hat{g}^{r,d}_k)\leq\sum_{l=d-k}^{k-1} \last(g^{r+1,d-1}_l)=\last(\hat{g}^{r+1,d}_k).
         \end{equation*}
  
         Next, assume that $k\geq d$. The last entry of $\hat{g}^{r+1,d}_k$ is given by
         \begin{equation*}
           \last(\hat{g}^{r+1,d}_k)=\sum_{l=0}^{k-1}\last(g^{r+1,d-1}_l)+\sum_{l=k}^{r}\last(\hat{g}^{r+1,d-1}_l).
         \end{equation*}
         Moreover, \ref{lemma:symG} (i) implies that $\sum_{l=k}^{r+1-k+d}\last(\hat{g}^{r+1,d-1}_l)=0$. 
         Using this and part (i) of the induction hypothesis we conclude
         \begin{align*}
           \last(\hat{g}^{r+1,d}_k)&=\sum_{l=0}^{k-1}\last(g^{r+1,d-1}_l)+\sum_{l=r+2-k+d}^{r}\last(\hat{g}^{r+1,d-1}_l)\\
           &\geq \sum_{l=0}^{k-1}\last(g^{r,d-1}_l)+\sum_{l=r+2-k+d}^{r}\last(\hat{g}^{r+1,d-1}_l).
         \end{align*} 
         Applying part (iii) of the induction hypothesis to the last sum, it follows that
         \begin{align*}
           \last(\hat{g}^{r+1,d}_k)&\geq \sum_{l=0}^{k-1}\last(g^{r,d-1}_l)+\sum_{l=r+2-k+d}^{r}\last(\hat{g}^{r,d-1}_{l-1})\\
  	     &=\sum_{l=0}^{k-1}\last(g^{r,d-1}_l)+\sum_{l=k}^{r-k+d}\last(\hat{g}^{r,d-1}_l)+\sum_{l=r+1-k+d}^{r-1}\last(\hat{g}^{r,d-1}_{l})\\
  	     &=\sum_{l=0}^{k-1}\last(g^{r,d-1}_l)+\sum_{l=k}^{r-1}\last(\hat{g}^{r,d-1}_l)\\
  	     &=\last(\hat{g}^{r,d}_k).
         \end{align*}
         For the first equality we have used that $\sum_{l=k}^{r-k+d}\last(\hat{g}^{r,d-1}_l)=0$ by \ref{lemma:symG} (i).
    \end{itemize}
  \end{proof}


  As a consequence of the last lemma and of \ref{lem:nonnegative} we obtain the following relation between the vectors 
  $g^{1,d}_k$ and $g^{r,d}_k$ for $r\geq d$.

  \begin{corollary}\label{cor:g-growth}
    Let $1\leq d\leq r$ and $0\leq k\leq d-1$ be integers. Then:
    \begin{equation*}
      g^{1,d}_k \leq g^{d,d}_k \leq \cdots \leq g^{r,d}_k \leq g^{r+1,d}_k.
    \end{equation*}
  \end{corollary}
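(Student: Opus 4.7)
The plan is to decompose the chain into two pieces. The inequalities from $g^{d,d}_k$ onward follow by iterating \ref{prop:growth} (i): for each $r \geq d$, the hypotheses $1 \leq d \leq r$ and $0 \leq k \leq d-1 \leq r$ are satisfied, so $g^{r,d}_k \leq g^{r+1,d}_k$ componentwise. Chaining these inequalities for $r = d, d+1, \ldots$ yields $g^{d,d}_k \leq g^{d+1,d}_k \leq \cdots \leq g^{r,d}_k \leq g^{r+1,d}_k$.

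It remains to establish the single inequality $g^{1,d}_k \leq g^{d,d}_k$, for which \ref{prop:growth} (i) cannot be applied since it requires the condition $d \leq r$. I would argue this directly from the definitions, splitting on the value of $k$. If $1 \leq k \leq d-1$, then by the convention adopted in part (iii) of the definition of $g^{r,d}_k$, the vector $g^{1,d}_k$ is the all-zero vector (since $k \geq r = 1$), while $g^{d,d}_k$ is non-negative by \ref{lem:nonnegative} (i); the inequality is immediate. If $k = 0$, then a direct computation using $C(0, d, i) = \delta_{0, i}$ shows that $g^{1,d}_0 = (1, -1, 0, \ldots, 0)$ for $d \geq 2$, and $g^{1,1}_0 = (1)$ for $d = 1$. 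On the other hand, the first entry of $g^{d,d}_0$ equals $C(d-1, d, 0) = 1$, matching the first entry of $g^{1,d}_0$, while the remaining entries of $g^{d,d}_0$ are non-negative by \ref{lem:nonnegative} (i), hence dominate the entries $-1, 0, \ldots, 0$ of $g^{1,d}_0$.

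The only mildly delicate point is the base case $k = 0$: unlike the other cases, $g^{1,d}_0$ has a strictly negative entry, and \ref{lem:nonnegative} (i) does not apply since its hypothesis $d \leq r$ fails for $r = 1$ and $d \geq 2$. This small obstacle is dissolved by the explicit formula $C(0, d, i) = \delta_{0, i}$, after which the comparison becomes trivial. No inductive machinery beyond what is already developed in the previous lemmas is needed.
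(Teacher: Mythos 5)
Your proof is correct and follows essentially the same route as the paper: the chain from $g^{d,d}_k$ onward comes from iterating \ref{prop:growth} (i), and the leftmost inequality is settled by computing $g^{1,d}_k$ explicitly (zero vector for $k\neq 0$ by the convention $k\geq r$, and $(1,-1,0,\ldots,0)$ for $k=0$) and invoking \ref{lem:nonnegative} (i) for $g^{d,d}_k$. If anything, you are slightly more explicit than the paper in matching the leading entries (both equal $\delta_{0,k}$) and in handling $d=1$, which is a welcome but inessential refinement.
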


  \begin{proof}
    All inequalities except for the leftmost follow from \ref{prop:growth}.
    It is easily seen that 
    \begin{equation*}
      g^{1,d}_k =
      \begin{cases}
         (0,\;\;\,0,0, \ldots, 0) \;\;\; \quad\mbox{ if } k \neq 0 \\
         (1,-1,0,\ldots, 0)       \;\;\; \quad\mbox{ if } k=0 \\
      \end{cases}
    \end{equation*}
    The claim now follows directly from \ref{lem:nonnegative} (i).
  \end{proof}

\section{Proof of \ref{theorem:main}}\label{Section:main}
  \label{sec:proof}

  We now recall the definition and important properties of admissible vectors introduced by Murai in \cite{MuraiManifold}. 

  \begin{definition}
    Let $f=(1,f_0,f_1,\ldots, f_d)\in \ZZ^{d+2}$ be the $f$-vector of a simplicial complex (We do not assume $f_d\neq 0$) and 
    let $\alpha=(0,1,\alpha_0,\ldots,\alpha_{d-1})$, $\beta\in \ZZ^{d+2}$.
    \begin{itemize}
      \item[(i)] The vector $\alpha$ is called a \emph{basic admissible vector} for $f$ if $(1,\alpha_0,\ldots,\alpha_{d-1})$ is the $f$-vector of a
        simplicial complex and $f_i\geq \alpha_i$ for $0\leq i\leq d-1$.
      \item[(ii)] The vector $\beta$ is called \emph{admissible} for $f$ if there exist vectors $\beta^{(1)},\ldots,\beta^{(t)}\in \ZZ^{d+2}$ such that 
        $\beta= \beta^{(1)}+\cdots +\beta^{(t)}$ and $\beta^{(k)}$ is a basic admissible vector for $f+\beta^{(1)}+\cdots +\beta^{(k-1)}$ 
        for $1\leq k\leq t$.
    \end{itemize}
  \end{definition}
  The following lemma states some properties of admissible vectors which will be crucial for the proof of \ref{theorem:main} (i).
  \begin{lemma}\cite[Lem. 3.1]{MuraiManifold} \label{lem:admissible}
    Let $f=(1,f_0,f_1,\ldots,f_d)\in\ZZ^{d+2}$ be the $f$-vector of a simplicial complex and let 
    $\alpha=(0,\alpha_{-1},\alpha_0,\ldots,\alpha_{d-1})\in \ZZ^{d+2}$
    be admissible for $f$. Then:
    \begin{itemize}
      \item[(i)] $f+\alpha$ is the $f$-vector of a simplicial complex.
      \item[(ii)] If $g\in\ZZ^{d+2}$ is the $f$-vector of a simplicial complex and $g\geq f$ (componentwise), then $\alpha$ is admissible for $g$.
      \item[(iii)] If $\beta\in\ZZ^{d+2}$ is admissible for $f$, then $\alpha +\beta$ is admissible for $f$.
      \item[(iv)] For any integer $0\leq b\leq \alpha_{d-1}$, the vector $(0,1,\alpha_0,\ldots,\alpha_{d-2},\alpha_{d-1}-b)$ is admissible for $f$.
    \end{itemize}
  \end{lemma}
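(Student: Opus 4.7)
The plan is to prove the four parts in order, each reducing to the recursive structure of admissibility together with one combinatorial construction at the base. For part (i), I induct on the length $t$ of a witness decomposition $\alpha = \beta^{(1)} + \cdots + \beta^{(t)}$. For $t > 1$, the base case applied to $\beta^{(1)}$ shows $f + \beta^{(1)}$ is an $f$-vector, while $\beta^{(2)} + \cdots + \beta^{(t)}$ is admissible for $f + \beta^{(1)}$ by the very definition of admissibility, so the induction hypothesis closes the step. The crux is the base case $t = 1$ where $\alpha$ is basic admissible: given a complex $\Gamma$ realizing $f$ and a complex $\Delta'$ realizing $(1, \alpha_0, \ldots, \alpha_{d-1})$, I would find a subcomplex $\Gamma'$ of $\Gamma$ with $f$-vector $(1, \alpha_0, \ldots, \alpha_{d-1})$, adjoin a new vertex $v$, and augment $\Gamma$ by the cone on $\Gamma'$ with apex $v$. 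The newly added faces are $\{v\}$ together with $\{v\} \cup F$ for $F \in \Gamma'$, contributing exactly $(0, 1, \alpha_0, \ldots, \alpha_{d-1}) = \alpha$ to the $f$-vector.

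Finding such a subcomplex $\Gamma'$ is the main obstacle, since the pointwise inequalities $f_i \geq \alpha_i$ do not by themselves yield an embedding of $\Delta'$ into $\Gamma$. I would overcome this by first replacing $\Gamma$ with an algebraically (or symmetrically) shifted complex of the same $f$-vector; in such a shifted $\Gamma$ a lex-initial (compressed) subcomplex realizing $(1, \alpha_0, \ldots, \alpha_{d-1})$ is automatically a subcomplex by the Kruskal--Katona theorem. Since (i) only asserts the \emph{existence} of some complex realizing $f + \alpha$, the passage to a shift loses nothing.

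Parts (ii), (iii), and (iv) are then bookkeeping, enabled by the monotonicity of basic admissibility in the base $f$-vector. For (ii), the same decomposition of $\alpha$ witnesses admissibility for $g$: the inequalities $f_i + \sum_{j < k} \beta^{(j)}_i \geq \beta^{(k)}_i$ carry over to $g_i + \sum_{j < k} \beta^{(j)}_i \geq \beta^{(k)}_i$ since $g \geq f$, and by part (i) each partial sum $g + \sum_{j < k} \beta^{(j)}$ is an $f$-vector. For (iii), apply (i) to conclude $f + \alpha$ is an $f$-vector, then use (ii) with $f + \alpha \geq f$ to see that $\beta$ is admissible for $f + \alpha$, and concatenate a witness decomposition of $\alpha$ for $f$ with one of $\beta$ for $f + \alpha$. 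For (iv), the key input is that basic admissibility is preserved under decreasing the last coordinate: if $(1, \alpha_0, \ldots, \alpha_{d-1})$ is the $f$-vector of some $\Delta'$, then $(1, \alpha_0, \ldots, \alpha_{d-2}, \alpha_{d-1} - b)$ is the $f$-vector of $\Delta'$ with $b$ top-dimensional faces removed, and the pointwise inequalities become only weaker. One therefore distributes the reduction $b$ across the last coordinates of the basic admissible summands of a witness decomposition of $\alpha$, applying the truncation summand by summand and invoking (ii) as needed to readjust the intermediate $f$-vectors.
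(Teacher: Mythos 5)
The paper itself offers no argument for this lemma --- it is quoted verbatim from Murai \cite{MuraiManifold} --- so your proposal has to be measured against Murai's proof, and in structure it reconstructs essentially that argument: induction on the number $t$ of basic summands with a coning construction at the base of (i), monotonicity of the defining inequalities in the base vector for (ii), concatenation of witness decompositions for (iii), and deletion of top-dimensional faces distributed over the basic summands for (iv). Parts (ii) and (iii) are correct as written. In (iv) you in effect prove the statement with second entry $\alpha_{-1}$ rather than the printed $1$; that is the right reading (and the form in which the lemma is used in the proof of \ref{pr:admissible}), since with the literal $1$ the claim is false: $(0,2,4,1)$ is admissible for $f=(1,2,0,0)$, but $(0,1,4,1)$ would have to be basic admissible for $f$, contradicting $f_0=2<4$. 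Note also that in your truncation step no appeal to (ii) is actually needed: lowering the last entries of earlier summands changes the intermediate base vectors only in position $d$, which never enters the inequalities $f_i\geq\alpha_i$ for $0\leq i\leq d-1$.

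The one genuine gap is in the base case of (i). Passing to an algebraically or symmetrically shifted complex with the same $f$-vector is not enough: a shifted complex need not contain a subcomplex realizing a componentwise smaller $f$-vector, and in particular the compressed complex realizing $(1,\alpha_0,\ldots,\alpha_{d-1})$ need not sit inside it. Concretely, for $d=2$ let $\Gamma$ be the star with edges $\{1,2\},\{1,3\},\{1,4\},\{1,5\}$; it is shifted (so if the given realization of $f$ is this star, shifting returns it unchanged) with $f$-vector $(1,5,4,0)$, and $\alpha=(0,1,3,3)$ is basic admissible for it, yet $\Gamma$ has no subcomplex with $f$-vector $(1,3,3)$, because any three of its edges span four vertices. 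So the embedding step, as you justify it, fails for this realization of $f$. The repair is to canonicalize more strongly: replace $\Gamma$ by the compressed (rev-lex) complex with $f$-vector $f$, which is a simplicial complex by the Kruskal--Katona theorem; then the compressed complex with $f$-vector $(1,\alpha_0,\ldots,\alpha_{d-1})$ is, in every dimension, an initial colex segment of the faces of $\Gamma$, hence a subcomplex, and your coning argument --- and with it the remainder of your proof --- goes through. This is exactly the device in Murai's original proof.
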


  Now \ref{theorem:main} (i) follows from the next proposition, \ref{lem:admissible} and \ref{rem:g-transform}.   
  Note that \ref{pr:admissible} and \ref{lem:admissible} require $r \geq d$. If $r \geq \lambda$ then 
  \ref{pr:admissible} (ii) implies that $g^{r,d}_k$ is admissible for $g^{r,d}_0$ for $1\leq k\leq \lambda$.
  Hence by \ref{rem:g-transform} and \ref{lem:admissible} the assertion follows for $r \geq \max(d,\lambda)$.
 
%
  \begin{proposition}
    \label{pr:admissible}
    Let $r\geq d>0$ be positive integers. Then:
    \begin{itemize}
      \item[(i)] $g^{r,d}_0$ is the $f$-vector of a simplicial complex.
      \item[(ii)] $g^{r,d}_k$ is admissible for $g^{r,d}_0$ for $1\leq k\leq r$.
      \item[(iii)] If $d$ is odd and $k\geq \frac{d}{2}$, then $\hat{g}^{r,d}_k$ is admissible for $(g^{r,d}_0,0)$.
    \end{itemize}
  \end{proposition}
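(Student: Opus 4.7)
My plan is a simultaneous induction on $d$, proving (i), (ii), and (iii) together. The base cases $d\leq 2$ can be read off directly from \ref{Ex:SmallValues}: all the relevant vectors are short enough that their $f$-vector status or explicit admissible decomposition is immediate. The inductive step combines the recursions \eqref{eq:even1}, \eqref{eq:odd1}, the sign information from \ref{lem:nonnegative}, the symmetries in \ref{lemma:sym1} and \ref{lemma:symG}, and the admissibility calculus in \ref{lem:admissible}.

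\textbf{Odd $d$ (so $d-1$ is even).} Here the ambient lengths of $g^{r,d}_k$ and $g^{r,d-1}_l$ coincide. Deleting the last coordinate in \eqref{eq:odd1} at $k=0$ gives the clean identity
\begin{equation*}
  g^{r,d}_0 \;=\; \sum_{l=0}^{r-1} g^{r,d-1}_l \;=\; g^{r,d-1}_0 \;+\; \sum_{l=1}^{r-1} g^{r,d-1}_l,
\end{equation*}
so (i) follows from IH (i), (ii) for $d-1$ together with \ref{lem:admissible} (i) and (iii). For (ii) at odd $d$ one works with the full \eqref{eq:odd1}, grouping each $(0,g^{r,d-1}_l)$-term into basic admissibles via the IH and assembling with \ref{lem:admissible} (iii)--(iv). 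For (iii), the critical input is that \ref{lem:nonnegative} (ii) forces $\last(\hat g^{r,d}_k)\geq 0$ precisely when $k\geq d/2$; \eqref{eq:odd2} expresses this last entry as a sum of last entries of lower-dimensional vectors handled by the IH, and one lifts the admissibility chain produced by (ii) for $g^{r,d}_k$ coordinate-wise to $(g^{r,d}_0,0)$, using \ref{lem:admissible} (iv) to parcel out the (non-negative) last entry among the successive basic admissibles.

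\textbf{Even $d$ (so $d-1$ is odd).} Now the length grows by one, and $g^{r,d}_0=\sum_{l=0}^{r-1}\hat g^{r,d-1}_l$ mixes summands whose last entries change sign at $l=(d-1)/2$. The strategy is to split the range at $l=d/2$: for $l\geq d/2$ each $\hat g^{r,d-1}_l$ is admissible for $(g^{r,d-1}_0,0)$ by IH (iii) for $d-1$, whereas for $0\leq l<d/2$ each summand $\hat g^{r,d-1}_l$ has a negative last entry obstructing direct admissibility. I would pair it via \ref{lemma:sym1} (i) with $\hat g^{r,d-1}_{d-1-l}$, whose last entry is exactly its negative, so that the combined pair has zero last coordinate; the tail indices $l\geq d$ are handled analogously by \ref{lemma:symG}. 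After this pairing, the paired blocks can be folded into the admissible decomposition supplied by the $l\geq d/2$ range, with \ref{lem:admissible} (iv) used to truncate last coordinates to their required values. Part (i) then follows from \ref{lem:admissible} (i) with base $(g^{r,d-1}_0,0)$, and part (ii) is obtained by running the same machinery on the full recursion \eqref{eq:even1}.

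The main obstacle is the even-$d$ case of (i) and (ii): the alternating signs of $\last(\hat g^{r,d-1}_l)$ for small $l$ rule out any naive term-by-term application of the IH. The pairing trick just sketched is the technical heart of the proof and is essentially the analogue of the argument used by Murai in \cite{MuraiManifold} for barycentric subdivisions (the authors have already mirrored his proof of \ref{lemma:sym1}). The bookkeeping that must be verified is that the running partial sums always dominate the next basic admissible increment componentwise, so that the constructed sequence really is a chain of basic admissibles; once that verification is in hand, \ref{lem:admissible} (iii) glues the pieces together and \ref{lem:admissible} (iv) absorbs the last-coordinate corrections produced by the symmetric pairings.
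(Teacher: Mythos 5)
Your overall architecture --- simultaneous induction on $d$ with base case $d\le 2$ read off from \ref{Ex:SmallValues}, the recursions \eqref{eq:odd1} and \eqref{eq:even1}, the sign information from \ref{lem:nonnegative}, the symmetries, and Murai's calculus \ref{lem:admissible} --- is exactly the paper's, and your odd-$d$ treatment of (i) coincides with it. Your even-$d$ pairing of $\hat g^{r,d-1}_l$ with $\hat g^{r,d-1}_{d-1-l}$ is a pairwise repackaging of what the paper does by collapsing the whole block $l=0,\ldots,d-1$ to $\bigl(\sum_{l=0}^{d-1}g^{r,d-1}_l,0\bigr)$ via \ref{lemma:sym1} (i); both work. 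Two smaller points: \ref{lemma:symG} (i) is a statement about even dimension, so in the even-$d$ step (where the summands are $\hat g^{r,d-1}_l$ with $d-1$ odd) it cannot be used to pair the tail $l\ge d$ --- harmless, since the induction hypothesis (iii) already covers every $l\ge \frac{d}{2}$, as you note. Also, in odd-$d$ (ii) the individual vectors $(0,g^{r,d-1}_l)$ with $l\ge 1$ begin $(0,0,\ldots)$ and hence are not sums of basic admissible vectors; they must be lumped with the $l=0$ term into the single basic admissible vector $(0,\sum_{l=0}^{k-1}\bar g^{r,d-1}_l)$, which requires checking that this sum is an $f$-vector dominated by $\bar g^{r,d}_0$ (via \eqref{eq:odd1} at $k=0$ and \ref{lem:nonnegative}).

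The genuine gap is your argument for (iii). Lifting the admissible chain for $g^{r,d}_k$ coordinatewise only yields that $(g^{r,d}_k,0)$ is admissible for $(g^{r,d}_0,0)$, i.e.\ an admissible vector whose last entry is $0$, and \ref{lem:admissible} (iv) only allows you to \emph{decrease} the last entry of an admissible vector; it gives no way to raise it to $\last(\hat g^{r,d}_k)>0$, and ``parcelling out'' a positive amount among the basic pieces would require re-proving that each enlarged piece is still the $f$-vector of a simplicial complex dominated by the running sum --- nothing in \ref{lem:admissible} provides this. What is needed (and what the paper does) is to build an admissible vector lying \emph{above} $\hat g^{r,d}_k$ that agrees with it except in the last coordinate: starting from \eqref{eq:odd1}, collapse the tail $\sum_{l=d}^{r-1}\hat g^{r,d-1}_l=\bigl(\sum_{l=d}^{r-1}g^{r,d-1}_l,0\bigr)$ using \ref{lemma:symG} (i) (legitimate here, since $d-1$ is even), bound $\hat g^{r,d-1}_l\le (g^{r,d-1}_l,0)$ for $k\le l\le d-1$ by \ref{lem:nonnegative}, and obtain $\hat g^{r,d}_k\le \sum_{l=k}^{r-1}(g^{r,d-1}_l,0)+\sum_{l=0}^{k-1}(0,g^{r,d-1}_l)$, whose right-hand side is admissible for $(g^{r,d}_0,0)$ by the argument already given for (ii) and whose positive last entry comes from the untruncated terms $(0,g^{r,d-1}_l)$. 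Only then does \ref{lem:admissible} (iv), together with $\last(\hat g^{r,d}_k)\ge 0$ for $k\ge \frac{d}{2}$, finish the proof. Since (iii) is precisely the ingredient that makes your even-$d$ step for (i) and (ii) run, this missing construction is essential rather than cosmetic.
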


  Note, that \ref{theorem:main} (i) already follows from \ref{pr:admissible} (i) and (ii). However, we will use assertion (iii) to show (i) and (ii) by induction.

  \begin{proof}
    Throughout the proof we denote for a vector $v$ by $\bar{v}$ the vector obtained from $v$ by deleting its last entry. 

    We proceed by induction on $d$ and prove (i), (ii) and (iii) simultaneously. 
    For $d\leq 2$ the claim follows from \ref{Ex:SmallValues}.  Now let $d>2$. 

    \medskip

    \noindent {\sf Case:} $d$ odd. 

    \begin{itemize}
      \item[(i)] 
        We obtain from \eqref{eq:odd1}
        \begin{equation} \label{eq:ProofOdd}
          g^{r,d}_0 = \sum_{l=0}^{r-1}g^{r,d-1}_l.
        \end{equation}

        It follows from the induction hypothesis (i) that $g^{r,d-1}_0$ is the $f$-vector of a simplicial complex and
        from (ii) it follows that $g^{r,d-1}_l$ is admissible for $g^{r,d-1}_0$ for $1\leq l\leq r-1$. \ref{lem:admissible} (iii) implies that
        $\sum_{l=1}^{r-1} g^{r,d-1}_l$ is admissible for $g^{r,d-1}_0$. From part (i) of the same lemma and \eqref{eq:ProofOdd} we infer that 
        $g^{r,d}_0$ is the $f$-vector of a simplicial complex. This shows (i).
    
      \item[(ii)] 
        From \eqref{eq:odd1} we deduce that

        \begin{equation*}
          g^{r,d}_k = \sum_{l=k}^{r-1} g^{r,d-1}_l + \sum_{l=0}^{k-1}\overline{(0,g^{r,d-1}_l)}.
        \end{equation*}

        By part (ii) of the induction hypothesis $g^{r,d-1}_l$ is admissible for $g^{r,d-1}_0$ for $k\leq l\leq r-1$. Thus, from \ref{lem:admissible} 
        we infer that $\sum_{l=k}^{r-1} g^{r,d-1}_l$ is admissible for $g^{r,d-1}_0$.
        \ref{lem:nonnegative} (i) combined with \eqref{eq:ProofOdd} yields that $g^{r,d}_0 \geq g^{r,d-1}_0$. Since we have already shown that
        $g^{r,d}_0$ is the $f$-vector of a simplicial complex it now follows from \ref{lem:admissible} (ii) that $\sum_{l=k}^{r-1} g^{r,d-1}_l$ is
        also admissible for $g^{r,d}_0$. It remains to prove that $\sum_{l=0}^{k-1}\overline{(0,g^{r,d-1}_l)}$ is admissible for $g^{r,d}_0$.
        The claim then follows from \ref{lem:admissible} (iii).
        We know from the induction hypothesis and \ref{lem:admissible} (i) that $\sum_{l=0}^{k-1}g^{r,d-1}_l$ is the $f$-vector of a simplicial complex.
        Removal of the last entry preserves this property, thus $\sum_{l=0}^{k-1}\bar{g}^{r,d-1}_l$ is the $f$-vector of a simplicial complex. Since 
        by \eqref{eq:ProofOdd} and \ref{lem:nonnegative} it further holds that $\sum_{l=0}^{k-1} \bar{g}^{r,d-1}_l\leq \bar{g}^{r,d}_0$ we conclude 
        that $(0,\sum_{l=0}^{k-1}\bar{g}^{r,d-1}_l)$ is a basic admissible vector for $\bar{g}^{r,d}_0$ which finishes the proof of (ii).

      \item[(iii)] 
        By \eqref{eq:odd1} we have that
        \begin{align*}
          \hat{g}^{r,d}_k & =\sum_{l=k}^{r-1}\hat{g}^{r,d-1}_l+\sum_{l=0}^{k-1}(0,g^{r,d-1}_l)\\
                          & =\sum_{l=k}^{d-1}\hat{g}^{r,d-1}_l+\sum_{l=d}^{r-1}\hat{g}^{r,d-1}_l+\sum_{l=0}^{k-1}(0,g^{r,d-1}_l)
        \end{align*}

        Since by \ref{lemma:symG} (i) $\sum_{l=d}^{r-1}\last(\hat{g}^{r,d-1}_l)=0$ it follows that
        $\sum_{l=d}^{r-1}\hat{g}^{r,d-1}_l = \sum_{l=d}^{r-1}(g^{r,d-1}_l,0)$. \ref{lem:nonnegative} states
        that $\last(\hat{g}^{r,d-1}_l)<0$ for $k\leq l\leq d-1$, i.e., 
        $\hat{g}^{r,d-1}_l \leq (g^{r,d-1}_l,0)$ (componentwise). We thus obtain

        \begin{align*}
          \hat{g}^{r,d}_k & \leq \sum_{l=k}^{d-1}(g^{r,d-1}_l,0)+\sum_{l=d}^{r-1}(g^{r,d-1}_l,0)+\sum_{l=0}^{k-1}(0,g^{r,d-1}_l)\\
                          & =\sum_{l=k}^{r-1}(g^{r,d-1}_l,0)+\sum_{l=0}^{k-1}(0,g^{r,d-1}_l).
        \end{align*}

        The proof of (ii) shows that the right-hand side of the above inequality is admissible for $(g^{r,d}_0,0)$. Since
        by \ref{lem:nonnegative} (i) and (ii) $\hat{g}^{r,d}_k$ is non-negative and since the inequality above is an equality
        for all but the last entry it follows from \ref{lem:admissible} (iv) that $\hat{g}^{r,d}_k$ is admissible for $(g^{r,d}_0,0)$.
      \end{itemize}

    \noindent {\sf Case:} $d$ even. 

    \begin{itemize}
      \item[(i)] 
        Equation \eqref{eq:even1} implies

        \begin{align} 
           g^{r,d}_0 & =\sum_{l=0}^{r-1}\hat{g}^{r,d-1}_l \notag \\
                     & =\sum_{l=0}^{d-1}(g^{r,d-1}_l,\last(\hat{g}^{r,d-1}_l))+\sum_{l=d}^{r-1}\hat{g}^{r,d-1}_l. \label{eq:ProofEven}
        \end{align}

        By \ref{lemma:sym1} (i) it holds that $\sum_{l=0}^{d-1}\last(\hat{g}^{r,d-1}_l)=0$. By the induction hypothesis (i) and (ii)
        and \ref{lem:admissible} $\sum_{l=1}^{d-1} g^{r,d-1}_l$ is admissible for $g^{r,d-1}_0$. Hence,
        $\sum_{l=1}^{d-1} (g^{r,d-1}_l,0)$ is admissible for $(g^{r,d-1}_0,0)$. By part (iii) of the induction hypothesis $\hat{g}^{r,d-1}_l$
        is admissible for $(g^{r,d-1}_0,0)$ as well. \ref{lem:admissible} (i) together with the above reasoning implies that
        $g^{r,d}_0$ is the $f$-vector of a simplicial complex. 

      \item[(ii)]  
        Equation \eqref{eq:odd1} yields

        \begin{align*}
          \hat{g}^{r,d}_k & =\sum_{l=k}^{r-1} \hat{g}^{r,d-1}_l + \sum_{l=0}^{k-1}(0,g^{r,d-1}_l)\\
                          & =\sum_{l=0}^{k-1}(0,g^{r,d-1}_l) + \sum_{l=k}^{\frac{d}{2}-1}\hat{g}^{r,d-1}_l + \sum_{l=\frac{d}{2}}^{r-1}\hat{g}^{r,d-1}_l \\
                         & \leq \sum_{l=0}^{k-1}(0,g^{r,d-1}_l) + \sum_{l=k}^{\frac{d}{2}-1}(g^{r,d-1}_l,0)+\sum_{l=\frac{d}{2}}^{r-1}\hat{g}^{r,d-1}_l,
        \end{align*}

        where the last inequality follows from \ref{lem:nonnegative} (ii).
        Since this inequality is an equality for all entries but the last one it follows from \ref{lem:admissible} (iv) that it suffices to show that
        the right-hand side of the above inequality is admissible for $g^{r,d}_0$.
        By the induction hypothesis $g^{r,d-1}_l$ is admissible for $g^{r,d-1}_0$ which by \ref{lem:admissible} (i) and (iii) implies that
        $\sum_{l=0}^{k-1} g^{r,d-1}_l$ is the $f$-vector of a simplicial complex. From \eqref{eq:ProofEven} and \ref{lem:nonnegative} (i)  we deduce that 
        $\sum_{l=0}^{k-1} g^{r,d-1}_l \leq \bar{g}^{r,d}_0$. This finally shows that $\sum_{l=0}^{k-1}(0,g^{r,d-1}_l)$ is admissible for $g^{r,d}_0$.
        Furthermore, by the induction hypothesis, $g^{r,d-1}_l$ is admissible for $g^{r,d-1}_0$ for $k\leq l\leq \frac{d}{2}-1$ and thus, by \ref{lem:admissible} (iii)
        $\sum_{l=k}^{\frac{d}{2}-1} g^{r,d-1}_l$ is admissible for $g^{r,d-1}_0$. In particular, $\sum_{l=k}^{\frac{d}{2}-1}(g^{r,d-1}_l,0)$ is admissible for $(g^{r,d-1}_0,0)$.
        Since by \ref{lem:nonnegative} (i) and \eqref{eq:ProofEven} it holds that $g^{r,d}_0\geq (g^{r,d-1}_0,0)$ \ref{lem:admissible} (ii) implies
        that $\sum_{l=k}^{\frac{d}{2}-1}(g^{r,d-1}_l,0)$ is admissible for $g^{r,d}_0$.
        By (iii) of the induction hypothesis $\hat{g}^{r,d-1}_l$ is admissible for $(g^{r,d-1}_0,0)$ for $\frac{d}{2}\leq l\leq r-1$. 
        This finishes the proof of (ii).
    \end{itemize}
  \end{proof}

  It remains to verify \ref{theorem:main} (ii).

  \begin{proof}[Proof of \ref{theorem:main} (ii)]
    The assumptions imply that after passing to any Veronese hi\-gher than the $(N+1)$\textsuperscript{st} 
    all coefficients of the power series are non-negative. In addition for sufficiently high Veronese the
    series will be of the form $\chi(A) + \frac{b_2(t)}{(1-t)^d}$ for a polynomial
    $b_2(t)$ of degree $< d$. 
    By Theorem 1.2 from \cite{Beck-Stapledon} it then follows that there is an $R > N$ such that for $r\geq R$ 
    the series $\left( \frac{b_2(t)}{(1-t)^d}\right)^{\langle r \rangle}$ has all coefficients of its numerator polynomial 
    positive and is for degree $\leq d$. Theorem 1.1 \cite{BW-Veronese} then implies that the coefficients of the numerator
    polynomial go to infinity when taking higher Veronese series, except for the constant coefficient. Hence by 
    $\chi(A) \geq 0$ high enough Veronese series of $a(t)$ will have a numerator polynomial with positive 
    coefficients. Now the assertion follows from \ref{theorem:main} (i). 
  \end{proof}

\section{Further results, questions and applications}

  \subsection{Further results and questions} We first prove a monotonicity result on the entries of the $g$-vectors. 
  \label{sec:further}

  \begin{proposition}
    Let $a(t) = \sum_{n \geq 0} a_nt^n  =  \frac{h_0(a) + \cdots + h_\lambda(a) t^\lambda}{(1-t)^d}$ with $h_\lambda(a) \neq 0$.
    If $h_i(a)\geq 0$ for $0\leq i\leq \lambda$, then 
    \begin{equation*}
      g(a) \leq g(a^{\langle d\rangle})\leq g(a^{\langle d+1\rangle})\leq g(a^{\langle d+2\rangle}) \leq \cdots \qquad . 
    \end{equation*}
  \end{proposition}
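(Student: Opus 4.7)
The plan is to reduce the statement, via the linearity encoded in \ref{rem:g-transform}, to the componentwise monotonicity of the vectors $g^{r,d}_k$ in $r$ that was already established in \ref{prop:growth} and \ref{cor:g-growth}. Since \ref{rem:g-transform} expresses
\begin{equation*}
g(a^{\langle r\rangle})=\sum_{k=0}^{\lambda}h_k(a)\,g^{r,d}_k
\end{equation*}
and $h_k(a)\ge 0$ for all $k$ by hypothesis, any chain of componentwise inequalities
\begin{equation*}
g^{1,d}_k\le g^{d,d}_k\le g^{d+1,d}_k\le g^{d+2,d}_k\le\cdots
\end{equation*}
can be multiplied by $h_k(a)$ and summed over $k$ to yield the desired chain for the $g$-vectors of the Veronese series.

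First I would handle the tail inequalities $g(a^{\langle r\rangle})\le g(a^{\langle r+1\rangle})$ for $r\ge d$. Here \ref{prop:growth}~(i) directly gives $g^{r,d}_k\le g^{r+1,d}_k$ componentwise for every $0\le k\le r$; for $k\ge r$ the vector $g^{r,d}_k$ is zero by the convention introduced before \ref{rem:g-transform}, while $g^{r+1,d}_k\ge 0$ by \ref{lem:nonnegative}~(i), so the inequality holds at those indices as well. Weighting by $h_k(a)\ge 0$ and summing produces the claim for all $r\ge d$.

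Next I would treat the base step $g(a)=g(a^{\langle 1\rangle})\le g(a^{\langle d\rangle})$. For $0\le k\le d-1$ the componentwise inequality $g^{1,d}_k\le g^{d,d}_k$ is precisely the leftmost inequality supplied by \ref{cor:g-growth}. For $k\ge d$ the vector $g^{1,d}_k$ is zero by convention while $g^{d,d}_k\ge 0$ by \ref{lem:nonnegative}~(i), so the comparison is again trivial. Summing $h_k(a)g^{1,d}_k \le h_k(a)g^{d,d}_k$ over $k$ concludes the step.

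I do not expect a serious new obstacle in this argument: the genuine difficulty lies hidden in \ref{prop:growth}, whose proof is a delicate simultaneous induction on $d$ that couples the monotonicity of the $g^{r,d}_k$ with precise sign information for the last entries of the $\hat{g}^{r,d}_k$ obtained in \ref{lem:nonnegative}. Once those ingredients are in hand, the present proposition is their straightforward linear-algebraic assembly via \ref{rem:g-transform}.
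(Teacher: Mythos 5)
Your argument is exactly the paper's: the proof there also consists of combining the linear $g$-vector transformation of \ref{rem:g-transform} (with the nonnegative weights $h_k(a)$) with the componentwise chain $g^{1,d}_k\le g^{d,d}_k\le g^{r,d}_k\le g^{r+1,d}_k$ of \ref{cor:g-growth}, which itself rests on \ref{prop:growth} and \ref{lem:nonnegative}. You merely spell out the bookkeeping for the indices $k\ge r$ (handled by the zero-vector convention), so the proposal is correct and takes essentially the same route.
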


  \begin{proof}
    The claim follows directly from the $g$-vector transformation stated in \ref{rem:g-transform} combined with 
    \ref{cor:g-growth}.
  \end{proof}

  Recall that a sequence $(a_0,\ldots,a_t)\in\mathbb{N}^{t+1}$ is called an \emph{$M$-sequence} if it is the 
  Hilbert function of a $0$-dimensional standard graded Artinian $k$-algebra. Macaulay 
  gave a characterization of such sequences by means of 
  numerical conditions (see e.g., \cite[Thm. 4.2.10]{BH-book}). 
  In particular, it is well-known that $f$-vectors of simplicial complexes are $M$-sequences. As a consequence of \ref{cor:algebra} we thus obtain.

  \begin{remark}
   \label{rem:algebra}
    Let $A$ be a $d$-dimensional standard graded $k$-algebra.
    \begin{itemize}
      \item[(i)] If $A$ is Cohen-Macaulay and $r\geq d$, then $g(A^{\langle r\rangle})$ is an $M$-sequence.
      \item[(ii)] If $\chi(A)\geq 0$, then there is an $R > 0$ such that for $r\geq R$ and $s \geq d$, the vector $g(A^{\langle r \cdot s \rangle})$ is an $M$-sequence.
    \end{itemize}
  \end{remark}
 
  Satoshi Murai has explained to us an algebraic proof of part (i) of the preceding remark. It is based on a suitable linear system of
  parameters and a linear form that multiplies injectively up to the middle degree.
  
  In the case $A$ is Cohen-Macaulay the $h$-vector of $A$ and all its Veronese algebras is an $M$-sequence. The latter follows since 
  Veronese algebras of Cohen-Macaulay algebras are again Cohen-Macaulay by a result from \cite{GotoWatanabe}.
  For general standard graded algebras $A$ with $\chi(A) \geq 0$ the numerator polynomial of the Hilbert series of high
  Veronese algebras of $A$ will have positive coefficients but already here it is not clear if it finally will become an
  $M$-sequence. Indeed the following is true.

  \begin{proposition}
    Let $a(t) = \sum_{n \geq 0} a_n t^n = \frac{h_0(a) + \cdots + h_\lambda(a) t^\lambda}{(1-t)^d}$ be a rational formal power series with
    integer coefficient sequence $(a_n)_{n \geq 0}$, where $h_\lambda(a) \neq 0$ and $h_0(a) = 1$. 
    If there is an $N > 0$ such that $a_n > 0$ for $n > N$ and $\chi(a) \geq 0$, then there is an $R > N$ such that for 
    each $r\geq R$ the coefficient sequence of the numerator polynomial of $a^{\langle r \rangle}(t)$ is an $M$-sequence.
  \end{proposition}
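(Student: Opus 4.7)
The plan is to combine \ref{theorem:main} (ii) with Macaulay's characterization of $M$-sequences and an asymptotic analysis in $r$ of the $h$-vector transformation. Fix $R$ as in \ref{theorem:main} (ii); by that proof, possibly after enlarging $R$, the coefficient sequence $h^{(r)} = (h^{(r)}_0, h^{(r)}_1, \ldots, h^{(r)}_d)$ of the numerator polynomial of $a^{\langle r\rangle}(t)$ has length at most $d+1$, satisfies $h^{(r)}_0 = h_0(a) = 1$, and has strictly positive entries for $1 \leq i \leq d-1$. By Macaulay's theorem \cite[Thm.~4.2.10]{BH-book} it then suffices to verify $h^{(r)}_{i+1} \leq (h^{(r)}_i)^{\langle i\rangle}$ for every $i \geq 1$ (the $i=0$ case is vacuous).

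For the low range $0 \leq i \leq \lfloor d/2\rfloor - 1$, I would appeal directly to \ref{theorem:main} (ii), which produces a simplicial complex $\Delta_r$ with $g_i(a^{\langle r\rangle}) = f_{i-1}(\Delta_r)$ for $0 \leq i \leq \lfloor d/2\rfloor$. Since $f$-vectors of simplicial complexes are $M$-sequences (as recalled in the discussion preceding \ref{rem:algebra}), the truncated $g$-vector is the Hilbert function of some Artinian standard graded $k$-algebra $B$. The tensor product $B \otimes_k k[x]/(x^{\lfloor d/2\rfloor + 1})$ with $\deg x = 1$ is then Artinian and has Hilbert function equal to the partial sum $\sum_{j \leq i} g_j(a^{\langle r\rangle}) = h^{(r)}_i$ in degree $0 \leq i \leq \lfloor d/2\rfloor$, exhibiting $(h^{(r)}_0, \ldots, h^{(r)}_{\lfloor d/2\rfloor})$ as an $M$-sequence and so verifying Macaulay's bound in the low range.

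For the upper range $\lfloor d/2\rfloor \leq i \leq d-1$, I would argue asymptotically. From \ref{transform2} we have $h^{(r)}_i = \sum_{j=0}^{\lambda} C(r-1,d,ir-j)\, h_j(a)$, and by inclusion-exclusion one gets, for $r$ large, $C(r-1,d,ir-j) = \sum_{s=0}^i (-1)^s \binom{d}{s} \binom{(i-s)r - j + d - 1}{d-1}$, which is a polynomial in $r$ of degree exactly $d-1$ with a positive leading coefficient depending only on $i$ and $d$. Summing over $j$, the leading coefficient of $h^{(r)}_i$ in $r$ is proportional to $h(1) = \sum_j h_j(a)$, which is strictly positive under the mild genericity assumption that $d$ is the order of the pole of $a(t)$ at $t=1$ (a consequence of the eventual positivity of $a_n$ after trivial reductions). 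Hence $h^{(r)}_i = \Theta(r^{d-1})$ for $1 \leq i \leq d-1$, while the cube-symmetry from \ref{lemma:rec} (i) forces $h^{(r)}_d$ to stay bounded in $r$. Writing the largest binomial in the Macaulay representation of $h^{(r)}_i$ as $\binom{n}{i}$ with $n = \Theta(r^{(d-1)/i})$ yields $(h^{(r)}_i)^{\langle i\rangle} \geq \binom{n+1}{i+1} = \Theta(r^{(d-1)(i+1)/i})$, which dominates $h^{(r)}_{i+1} = O(r^{d-1})$ for $r$ large since $(d-1)(i+1)/i > d-1$ for $i \geq 1$. Enlarging $R$ uniformly across the finitely many $i$'s completes the argument.

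The main obstacle lies in the lower bound $h^{(r)}_i = \Omega(r^{d-1})$ for $1 \leq i \leq d-1$, since the $h_j(a)$ need not be non-negative and the leading terms of $\sum_j C(r-1,d,ir-j) h_j(a)$ could in principle cancel. The key observation ruling this out is that the leading-in-$r$ coefficient of $C(r-1,d,ir-j)$ is the same for all $j$, so that the leading coefficient of $h^{(r)}_i$ in $r$ is proportional to $h(1)$ and hence strictly positive.
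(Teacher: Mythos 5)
Your route is genuinely different from the paper's, but as written it has two gaps. The first is the low-range step: you appeal to \ref{theorem:main} (ii) as if it produced, for \emph{every} $r\geq R$, a simplicial complex $\Delta_r$ with $g_i(a^{\langle r\rangle})=f_{i-1}(\Delta_r)$. It does not: its conclusion only covers Veronese indices of the form $r\cdot s$ with $r\geq R$ and $s\geq d$, because its proof first passes to a high Veronese to gain nonnegative numerator coefficients and then must apply part (i), which costs a second Veronese factor $\geq d$. The paper explicitly flags this limitation and poses the ``every $r\geq R$'' version as an open question, so it cannot be quoted here. (Ironically, if your growth estimates held, your upper-range asymptotic argument would apply verbatim to every $1\leq i\leq d-1$, so the low-range detour via \ref{theorem:main} (ii) and the tensor-product trick would be unnecessary.)

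The second gap is in the asymptotics and in nonnegativity. Your lower bound $h^{(r)}_i=\Theta(r^{d-1})$ needs $h(1)=\sum_j h_j(a)>0$, and this is neither assumed nor a consequence of the hypotheses: for $h(t)=1-t$, $d=2$ one has $a(t)=1/(1-t)$, so $a_n>0$ for all $n$, $h_0=1$, $h_\lambda\neq 0$, $\chi(a)=0$, yet $h(1)=0$; moreover the ``trivial reduction'' to the true pole order changes $d$ and hence the numerator polynomial, i.e.\ it changes the statement, so it is not harmless. In addition, an $M$-sequence must be nonnegative, and your positivity citation only covers $1\leq i\leq d-1$; for the top entry you prove boundedness of $h^{(r)}_d$ but never its nonnegativity, which is precisely where $\chi(a)\geq 0$ (and the degree/parity bookkeeping for the term $\chi(a)(1-t)^d$) has to enter, since for large $r$ the numerator of $a^{\langle r\rangle}$ differs from that of the $\frac{b_2(t)}{(1-t)^d}$-part, of degree $\leq d-1$, exactly by this term. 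For comparison, the paper's proof avoids Macaulay representations entirely: arguing as in the proof of \ref{theorem:main} (ii) it gets positive numerator coefficients for large $r$, invokes \cite[Thm.~1.4]{BW-Veronese} to get real-rootedness of the numerator for large $r$, and then applies \cite[Thm.~3.6]{Bell-Skandera}, which says that a real-rooted polynomial $1+c_1t+\cdots+c_dt^d$ with positive integer coefficients has an $M$-sequence as coefficient sequence. Your elementary approach could plausibly be repaired (drop the low-range appeal, justify $h(1)>0$ or treat the degenerate case, and settle the sign of $h^{(r)}_d$), but as written these steps do not go through.
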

  \begin{proof}
    As in the proof of \ref{theorem:main} we deduce that the numerator polynomial of $a^{\langle r \rangle}(t)$ will have
    positive coefficients for large enough $r$. Now by \cite[Thm. 1.4]{BW-Veronese} this polynomial will also be
    real rooted for large enough $r$. Then the result follows from \cite[Thm. 3.6]{Bell-Skandera} where it is shown that
    the coefficient sequence of a real rooted polynomial $1+c_1t+\cdots + c_dt^d$ with positive integer
    coefficients is an $M$-sequence.
  \end{proof}

  The preceding proposition raises the question if real rootedness of a polynomial $1+c_1t+\cdots + c_dt^d$ with positive integer
  coefficients implies more than the coefficient sequence being an $M$-sequence. Indeed, already in \cite{Bell-Skandera}  
  Bell and Skandera conjecture that the assumptions imply that the coefficient sequence is the $f$-vector of a simplicial
  complex. Here we would like to ask the question if indeed real rootedness already implies the consequences of 
  \ref{theorem:main} (i).

  \begin{question}
    Let $1+c_1t+\cdots + c_dt^d$ be a real rooted polynomial with positive integer coefficients. Assume that 
    $1 < c_1 < \cdots < c_l$. Is $(1,c_1-1,c_2-c_1,\ldots, c_l-c_{l-1})$ a $f$-vector of a simplicial complex?
  \end{question}
 
  Another interesting question arises from \ref{theorem:main} (ii). First the limiting behavior does not give a single bound
  $R$ such that for $r > R$ the assertions are valid for $r$\textsuperscript{th} Veronese series, rather the bound
  depends on a `starting' parameter. Second it is not clear to us if the assumption $\chi(A) \geq 0$ is really needed.
  Indeed, under the remaining assumptions high Veronese series will have a numerator polynomial with positive coefficients
  except for the highest coefficient which is $\chi(A)$. These two observations motivate the following questions.

  \begin{question}
    Assume there is an $N > 0$ such that $a_n > 0$ for $n > N$.
    Does there exist an $R > N$ such that for
    each $r\geq R$, there exists a simplicial complex $\Delta_{r}$ such that
    \begin{equation*}
      g_i(a^{\langle r \rangle})=f_{i-1}(\Delta_{r }) \qquad \qquad \mbox{ for } 0\leq i\leq \left\lfloor\frac{d}{2}\right\rfloor? 
    \end{equation*}
  \end{question}

  The main result of \cite{NevoPetersenTenner} suggests another possible strengthening of \ref{theorem:main}. Indeed in \cite{NevoPetersenTenner} 
  the authors conclude that the $\gamma$-vector of the barycentric subdivision of homology sphere is the $f$-vector of a balanced simplicial 
  complex. But the conclusion of \ref{theorem:main} (i) cannot be modified in this direction, indeed the $g$-vector of the $9$\textsuperscript{th} 
  of the polynomial ring in $8$ variables is not the $f$-vector of a balanced simplicial complex.

  \subsection{Application to edgewise subdivisions}
  \label{sec:edgewise}
  In this section we review the edgewise subdivision of a simplicial complex $\Delta$ (see \cite{Edelsbrunner}) 
  and its relation to the Veronese algebras of the Stanley-Reisner ring of $\Delta$ (see \cite{BrunR}).
  This allows us to apply the results obtained in \ref{Section:main}
  to the $g$-vectors of those complexes.

  Before we proceed to edgewise subdivision we recall some basic definitions. Let $k$ be a field. 
  For an abstract simplicial complex $\Delta$ on vertex set 
  $[n] := \{1, \ldots, n\}$ the \emph{Stanley-Reisner ring}
  $k[\Delta]$ of $\Delta$ is the quotient of the polynomial ring $k[x_1, \ldots, x_n]$ by the
  Stanley-Reisner ideal $I_\Delta := \langle \prod_{i \in F} x_i~|~F \not\in \Delta\rangle$ generated
  by the squarefree monomials whose support does not lie in $\Delta$. 
  The \emph{$h$-vector} $h(\Delta)=h(k[\Delta])$ of $\Delta$ is defined as the $h$-vector of $k[\Delta]$.
  We call $g(\Delta) := g(k[\Delta])$ the \emph{$g$-vector} of $\Delta$.

  We are now ready to describe the construction of the $r$\textsuperscript{th} edgewise subdivision of a simplicial 
  complex. Let $\Delta$ be a simplicial complex on vertex set $[n]$ and 
  let $r\geq 1$ be a positive integer. Set $\Omega_r:=\{(i_1,\ldots,i_n)\in \NN^n~|~i_1+\cdots+i_n=r\}$. 
  Denote by $\fe_i$ the $i$\textsuperscript{th} unit vector of $\RR^n$. By the obvious 
  identification, we can consider $\Delta$ as a simplicial complex over the 
  vertex set $\Omega_1=\{\fe_1,\ldots,\fe_n\}$. 
  For $i\in [n]$ set $\fu_i:=\fe_i+\cdots+\fe_n$ and 
  for $a=(a_1,\ldots,a_n)\in\ZZ^n$, $a_l \geq 0$ for $1 \leq l \leq n$, let 
  $\ffi(a):=\sum_{l=1}^n a_l\cdot \fu_l$. The \emph{$r$\textsuperscript{th} edgewise 
  subdivision} of $\Delta$ is the simplicial complex $\Delta(r)$ on ground 
  set $\Omega_r$ such that $F \subseteq \Omega_r$ is a simplex in $\Delta(r)$ 
  if and only if 

  \begin{enumerate}
    \item[(i)]$\bigcup_{a\in F}\supp(a)\in\Delta$
    \item[(ii)] For all $a, \tilde{a}\in F$ either 
        $\ffi(a-\tilde{a})\in\{0,1\}^n$ or 
        $\ffi(\tilde{a}-a)\in\{0,1\}^n$.
  \end{enumerate}
 
  The following result by Brun and R\"omer \cite{BrunR} links the $r$\textsuperscript{th} edgewise 
  subdivision of a simplicial complexe to the $r$\textsuperscript{th} Veronese algebra 
  of its Stanley-Reisner ring. The formulation of the result requires some familiarity with the
  basic theory of Gr\"obner bases; see for example \cite{Adams}.

  \begin{proposition} \label{prop:initial}
    Let $\Delta$ be a simplicial complex on ground set $[n]$ and let $r \geq 1$. 
    Set $S(r) := k[y_{i_1,\ldots, i_n}~|~(i_1,\ldots, i_n) \in \Omega_r ]$ 
    and let $I(r)$
    be such that $k[\Delta]^{<r>}=S(r)/I(r)$.
    Then there is a term order $\preceq$ for which $I_{\Delta(r)}$ is the 
    initial ideal of $I(r)$.
  \end{proposition}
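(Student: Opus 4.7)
The plan is to exhibit an explicit Gröbner basis of $I(r)$ whose initial ideal coincides with $I_{\Delta(r)}$. First I would identify generators of $I(r)$ explicitly: the surjection $\phi : S(r) \to k[\Delta]^{\langle r\rangle}$ sending $y_{(i_1,\ldots,i_n)}$ to $x_1^{i_1}\cdots x_n^{i_n}$ (interpreted modulo $I_\Delta$) has kernel generated by two kinds of elements, namely the variables $y_a$ with $\supp(a) \notin \Delta$ (coming from the Stanley--Reisner relations) and the toric binomials $y_a y_b - y_c y_d$ whenever $a+b=c+d$ in $\ZZ^n$ (coming from the algebraic relations among degree-$r$ monomials in the polynomial ring). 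This reduces the problem to understanding which term among the two monomials of such a binomial should be taken as a leading term.

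The key ingredient is to set up the term order using the partial-sum map $\ffi$. Translated through $\ffi$, the edgewise compatibility condition (ii) for a pair $\{c,d\} \subseteq \Omega_r$ becomes: $\ffi(c)_j$ and $\ffi(d)_j$ differ by at most one for every coordinate $j$. Given any $v \in \ZZ^n_{\geq 0}$ with $\sum v_i = 2r$, there is a canonical decomposition $v = c+d$ with $\{c,d\}$ edgewise-compatible, determined by the formulas $\ffi(c)_j := \lceil \ffi(v)_j / 2 \rceil$ and $\ffi(d)_j := \lfloor \ffi(v)_j/2 \rfloor$ (one checks that these yield vectors in $\Omega_r$ since $\ffi(v)$ is non-decreasing). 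I would then choose a term order $\preceq$ on $S(r)$ --- for example a reverse lex order weighted by the coordinates of $\ffi$, extended diagonally to monomials --- that makes the canonical pair $y_c y_d$ smaller than any other $y_a y_b$ in its fibre $a+b = v$.

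With these pieces in place, the proof would proceed in two steps. First, for every pair $\{a,b\}$ violating condition (ii), the chosen order forces the initial term of $y_a y_b - y_c y_d$ (with $(c,d)$ the canonical pair of $a+b$) to be $y_a y_b$; combined with the monomial generators $y_a$ for $\supp(a) \notin \Delta$, this gives $I_{\Delta(r)} \subseteq \iin_\preceq(I(r))$. Second, I would use a Hilbert-series comparison: the normal-form monomials modulo the proposed Gröbner basis are precisely the monomials whose support is a face of $\Delta(r)$, so $\Hilb(S(r)/\iin_\preceq(I(r)),t) = \Hilb(S(r)/I_{\Delta(r)},t)$; since passing to the initial ideal preserves Hilbert series, this forces the reverse inclusion and hence equality.

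The main obstacle will be the confluence step: verifying that the rewriting $y_a y_b \mapsto y_c y_d$ driven by the chosen term order actually terminates and is confluent, and that its normal forms are exactly the monomials supported on edgewise-compatible antichains in $\Omega_r$. This is a combinatorial check on pairs of exponent vectors, essentially showing that iteratively replacing any non-compatible pair by its canonical replacement in the $\ffi$-coordinates decreases a well-founded invariant. Everything else --- generators of $I(r)$, the structure of $I_{\Delta(r)}$, and the Hilbert-series argument --- is then formal, but the compatibility of the sorting operation with the term order is the technical heart of the argument and is precisely where the edgewise combinatorics has to match the algebraic structure of the Veronese.
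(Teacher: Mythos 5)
The paper itself offers no proof of this proposition; it is quoted from Brun and R\"omer, so your attempt can only be judged on its own terms, and there it has a genuine gap at its foundation: the presentation of $I(r)$. The kernel of $S(r)\to k[\Delta]^{\langle r\rangle}$ is \emph{not} generated by the toric binomials $y_ay_b-y_cy_d$ ($a+b=c+d$) together with the variables $y_a$ with $\supp(a)\notin\Delta$; it also contains, and genuinely needs as generators, monomials $y_{a_1}\cdots y_{a_m}$ whose individual supports are faces of $\Delta$ but whose union of supports is a nonface. Concretely, let $\Delta$ be the boundary of a triangle on $[3]$ and $r=2$. Every $a\in\Omega_2$ has support of cardinality at most $2$, hence a face, so there are no ``bad'' variables at all; yet $y_{(1,1,0)}y_{(1,0,1)}$ maps to $x_1^2x_2x_3=0$ in $k[\Delta]$ and so lies in $I(2)$, whereas the ideal generated by your proposed generators is just the Veronese toric ideal, which is prime and contains no monomials. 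Worse for your plan, the pair $\{(1,1,0),(1,0,1)\}$ \emph{satisfies} condition (ii) --- it is exactly the canonical (ceiling/floor) pair of $(2,1,1)$ in your construction --- so this monomial is a standard monomial for your proposed Gr\"obner basis, while it lies in $I_{\Delta(2)}$ because of condition (i). Hence your basis is not even a generating set of $I(r)$, your argument for $I_{\Delta(r)}\subseteq\iin_\preceq(I(r))$ only covers the condition-(ii) nonfaces and the degree-one part of condition (i) (minimal nonfaces of $\Delta(r)$ forced by condition (i) can have cardinality $\geq 2$ with every vertex supported on a face of $\Delta$), and the confluence/normal-form description at the end would fail.

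The repair is to enlarge the basis: besides the sorting binomials one must include all (sorted) monomials whose exponent vectors have union of supports outside $\Delta$; then the initial terms are the unsorted quadratic monomials (condition (ii)) together with the sorted monomials with nonface support (condition (i)), which together generate exactly $I_{\Delta(r)}$ --- this is in essence the Brun--R\"omer argument, built on Sturmfels' sorting Gr\"obner basis for Veronese toric ideals. Two further cautions about your outline even after this fix: the existence of a term order that picks the sorted pair in every fibre is itself a nontrivial theorem (the ``sorting order''), not an off-the-shelf $\ffi$-weighted revlex, though you rightly flag it as the technical heart; and your Hilbert-series step presupposes $\Hilb(k[\Delta(r)],t)=\Hilb(k[\Delta]^{\langle r\rangle},t)$, which in this paper is \emph{deduced from} the proposition, so you would need an independent proof of that equality (essentially the unique sorted decomposition of each monomial supported on a face of $\Delta$), i.e. the same combinatorial content you were hoping to shortcut.
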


  By basic facts on initial ideals and Hilbert functions it follows that
  \begin{equation*}
    \Hilb(S(r)/I(r),t)=\Hilb(k[\Delta(r)],t).
  \end{equation*}
  Thus, the $r$\textsuperscript{th} Veronese algebra of the Stanley-Reisner ring of a simplicial 
  complex $\Delta$ and the $r$\textsuperscript{th} edgewise subdivision of this complex have
  the same $h$- and $g$-vector, respectively. From this, we infer that the $h$- and $g$-vectors of
  edgewise subdivisions of simplicial complexes satisfy the same conditions as the 
  $h$- and $g$-vectors of Veronese algebras of Stanley-Reisner rings. Also recall that a simplicial complex $\Delta$ 
  is called \emph{Cohen-Macaulay over a field $k$} if $k[\Delta]$ is a Cohen-Macaulay ring. Therefore, by the fact that
  the numerator polynomial of $\Hilb(k[\Delta],t)$ has degree $\leq d$ the \ref{cor:algebra}
  immediately implies the following.
 
  \begin{corollary}
    Let $\Delta$ be a $(d-1)$-dimensional simplicial complex 
    and let $\Delta(r)$ be the $r$\textsuperscript{th} edgewise subdivision of $\Delta$.  
    \begin{itemize}
      \item[(i)] If $\Delta$ is Cohen-Macaulay over some field and $r\geq d$, then there exists a simplicial complex $\Delta_r$ such that 
         \begin{equation*}
           g_i(\Delta^{\langle r\rangle})=f_{i-1}(\Delta_r) \qquad \qquad \mbox{ for } 0\leq i\leq \left\lfloor\frac{d}{2}\right\rfloor.
         \end{equation*}
      \item[(ii)] Then there exists an $R>0$ such that for $r\geq R$ and $s\geq d$, there exists a simplicial complex $\Gamma_{r\cdot s}$ such that 
        \begin{equation*}
          g_i(\Delta(r\cdot s))=f_{i-1}(\Gamma_{r\cdot s}) \qquad \qquad \mbox{ for } 0\leq i\leq \left\lfloor\frac{d}{2}\right\rfloor.
        \end{equation*}
        In particular, $g(\Delta(r\cdot s))$ is an $M$-sequence.
    \end{itemize}
  \end{corollary}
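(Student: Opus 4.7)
The plan is to reduce the corollary directly to \ref{cor:algebra} using the Brun--R\"omer correspondence \ref{prop:initial}. All the real work has been done already, and this final result is essentially a translation.

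First, I would invoke \ref{prop:initial} together with the standard fact that an ideal and its initial ideal share the same Hilbert function. This gives the identity
\begin{equation*}
  \Hilb(k[\Delta]^{\langle r\rangle},t) \;=\; \Hilb(k[\Delta(r)],t)
\end{equation*}
for every $r\geq 1$, and therefore
\begin{equation*}
  h(\Delta(r)) = h(k[\Delta]^{\langle r\rangle}), \qquad g(\Delta(r)) = g(k[\Delta]^{\langle r\rangle}).
\end{equation*}
Every assertion about $g(\Delta(r))$ is thus equivalent to the corresponding assertion about $g(k[\Delta]^{\langle r\rangle})$.

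For part (i), the hypothesis that $\Delta$ is Cohen-Macaulay over $k$ means exactly that $A := k[\Delta]$ is a Cohen-Macaulay standard graded $k$-algebra of Krull dimension $d$. Since $\Delta$ is $(d-1)$-dimensional, the numerator polynomial of $\Hilb(A,t)$ has degree $\lambda \leq d$, so $\max(\lambda,d)=d$. Applying \ref{cor:algebra}~(i) to $A$ for $r\geq d$ yields a simplicial complex $\Delta_r$ with $g_i(A^{\langle r\rangle}) = f_{i-1}(\Delta_r)$ for $0\leq i\leq \lfloor d/2\rfloor$, and the Hilbert-series identification transports this conclusion to $\Delta(r)$.

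Part (ii) follows by the same strategy: applying \ref{cor:algebra}~(ii) to $A = k[\Delta]$ (with the implicit understanding that its hypothesis $\chi(A)\geq 0$ holds, or is ensured by first passing to a sufficiently high Veronese of $A$) produces an $R>0$ and simplicial complexes $\Gamma_{r\cdot s}$ realizing the $g$-vectors of $A^{\langle r\cdot s\rangle}$, which via the identification become the required $\Gamma_{r\cdot s}$ for $\Delta(r\cdot s)$. The final $M$-sequence claim is then immediate, since every $f$-vector of a simplicial complex is an $M$-sequence (cf.\ \ref{rem:algebra}). The argument has essentially no obstacle beyond \ref{prop:initial}; the only mildly delicate point is confirming the hypothesis of \ref{cor:algebra}~(ii) in this simplicial setting, which is a purely algebraic check on the Hilbert series of $k[\Delta]$.
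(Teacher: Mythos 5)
Your reduction is exactly the paper's own argument: \ref{prop:initial} plus the fact that passing to an initial ideal preserves the Hilbert function gives $g(\Delta(r))=g(k[\Delta]^{\langle r\rangle})$, and then \ref{cor:algebra} is applied, with the observation that the numerator of $\Hilb(k[\Delta],t)$ has degree $\lambda\leq d$ so that $\max(\lambda,d)=d$ handles the bound in part (i). One caveat on part (ii), which you rightly flag as the delicate point but then try to dismiss: the fallback ``ensured by first passing to a sufficiently high Veronese of $A$'' cannot work, because the characteristic is invariant under the Veronese operation (in the decomposition $a(t)=b_1(t)+b_2(t)/(1-t)^d$ the constant term of $b_1$ is unchanged when one passes to $a^{\langle r\rangle}$), so $\chi(k[\Delta]^{\langle r\rangle})=\chi(k[\Delta])=-\tilde\chi(\Delta)$ for all $r$, and this quantity is negative whenever $\tilde\chi(\Delta)>0$ (e.g.\ two disjoint edges). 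The paper's own derivation of part (ii) silently invokes \ref{cor:algebra}~(ii) without checking its hypothesis $\chi(A)\geq 0$ either, so this is a gap in the source's statement/derivation that you share rather than introduce --- indeed you are the one who points at it --- but be aware that it is not repaired by taking higher Veroneses; it genuinely requires $\tilde\chi(\Delta)\leq 0$ (or some other argument) for the cited route to apply.
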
 

  \section{Acknowledgment}
  
    We thank Satoshi Murai for pointing out to us a mistake in the formulation 
    of \ref{theorem:main} (i) in a preliminary version of the paper and 
    explaining to us an algebraic proof of \ref{rem:algebra} (i). 
    We are grateful to Eran Nevo for helpful comments and suggestions.
    
  \bibliography{biblio}
  \bibliographystyle{plain}
\end{document}